\documentclass[12pt,a4paper,oneside,reqno]{amsproc}
\usepackage{amsmath} %amscd
\usepackage{amssymb,amsfonts,mathrsfs}
\usepackage[colorlinks=true,linkcolor=blue,citecolor=blue]{hyperref}
\usepackage[driver=pdftex,lmargin=2.5cm,rmargin=2.5cm,heightrounded=true,centering]{geometry}
\usepackage{math50}
\usepackage{local}
\usepackage{epsfig}  % want eps figures?
\usepackage{graphicx}  % want eps figures?
\usepackage{xy}

\usepackage{setspace}%%%%
\xyoption{all}

\setcounter{tocdepth}{2}
%\numberwithin{equation}{subsection}
\tolerance=2000 \emergencystretch=20pt
%}}}
% TITLE AND ABSTRACT %{{{
\begin{document}

\begin{spacing}{1.0}%%

\title[The Hartree equations]
{Nondegeneracy of ground states and multiple semiclassical solutions of the Hartree equation for general dimensions}
\author{Guoyuan Chen}
\address{School of Data Sciences, Zhejiang University of Finance \& Economics, Hangzhou 310018, P. R. China}
\email{gychen@zufe.edu.cn}
\newcommand{\optional}[1]{\relax}
\setcounter{secnumdepth}{3}
\setcounter{section}{0} \setcounter{equation}{0}
\numberwithin{equation}{section}

%\thanks{The author was partially supported by National Natural Science Foundation of China (No.11401521 and 11402226).}
%\subjclass[2000]{Primary 58J37; Secondary 58J40; 58J50; 58J05}
\keywords{Hartree equation, Schr\"odinger-Newton equation, nondegeneracy of ground states, semiclassical solutions, Lyapunov-Schmidt reduction}
\date{\today}
\begin{abstract}
We study nondegeneracy of ground states of the Hartree equation
$$
-\Delta u+u=(I_{2}\ast u^2)u\quad\mbox{ in }\mathbb R^n
$$
where $n=3,4,5$ and $I_2$ is the Newton potential.
As an application of the nondegeneracy result, we use a Lyapunov-Schmidt reduction argument to construct multiple semiclassical solutions to the following Hartree equation with an external potential
$$-\varepsilon^2\Delta u+u+V(x)u=\varepsilon^{-2}(I_{2}\ast u^2)u\quad \mbox{ in }\mathbb R^n.$$
\end{abstract}
\maketitle

\section{Introduction and main results}

We consider the Hartree equation:
\begin{equation}\label{e:ug}
-\Delta u+u=(I_{2}\ast u^2)u\quad \mbox{ in }\mathbb R^n,
\end{equation}
where $I_2(x)=\frac{1}{(n-2)|\mathbb S^{n-1}|}\frac{1}{|x|^{n-2}}$ is the Newton potential in the Euclidean space $\mathbb R^n$. Here $|\mathbb S^{n-1}|$ denotes the $(n-1)$-dimensional volume of $\mathbb S^{n-1}$. Equation (\ref{e:ug}) is equivalent to the Schr\"odinger-Newton equation
\begin{equation}\label{e:sn}
\left\{\begin{array}{rc}
         -\Delta u+u=vu & \mbox{ in }\mathbb R^n, \\
         -\Delta v=u^2 & \mbox{ in }\mathbb R^n.
       \end{array}
\right.
\end{equation}
Equation (\ref{e:ug}) can be considered as a special case of the Choquard equation
\begin{eqnarray}\label{e:gce}
-\Delta u+u=(I_{\alpha}\ast |u|^p)|u|^{p-2}u\quad \mbox{ in }\mathbb R^n,
\end{eqnarray}
where $I_{\alpha}$ denotes the Riesz potential with $\alpha\in (0,n)$ in $\mathbb R^n$, $p>1$. It is clear that (\ref{e:ug}) is the case $\alpha=2$ and $p=2$ in (\ref{e:gce}).
In the present paper, we will restrict our attention to Equation (\ref{e:ug}) (and (\ref{e:sn})).

 The Hartree equation is from various physical models such as H. Fr\"ohlich and S. Pekar's model of the polaron (\cite{Frohlich37,Pekar1954}), Ph. Choquard's model of an electron trapped in its own hole, and the Hartree-Fock theory of one-component plasma (\cite{Lieb1977}). The Hartree equation (or the Sch\"odinger-Newton equation) couples the Sch\"odinger equation of quantum physics together with non-relativistic Newtonian potential (see, for example, \cite{Bahrami2014,DIOSI1984,Jones1995,Penrose1996,Penrose1998,MoPenTod1998,TodMo1999} and the references therein).

Equation (\ref{e:ug}) corresponds to the following functional
\begin{equation}\label{e:functional}
F(u)=\frac{1}{2}\int_{\mathbb R^n}|\nabla u|^2+|u|^2-\frac{1}{4}\int_{\mathbb R^n}(I_{2}\ast |u|^2)|u|^2.
\end{equation}
From the Hardy-Littlewood-Sobolev inequality and the Sobolev imbedding theorem, we have to choose $n=3,4,5$ to ensure that $F$ is well-defined, continuously Fr{\'e}chet differentiable and has a ground state in $H^1(\mathbb R^n)$ (see, for example, \cite{TodMo1999,MorVSJFA2013}).

Many works were devoted to the study of basic properties of the ground states solution to the Choquard equation (\ref{e:gce}) (see i.e. \cite{Lieb1977,Lions19851,Lions19852,TodMo1999,Lenzmann2009,Ma2009,WeiWin2009,cingolani2012,MOROZ2013,Wang2016,Xiang2015} and the references therein).
For our further application to the Hartree equation, we summarize the properties as follows.

Let $n=3,4,5$. If $u\in H^1(\mathbb R^n)$ is a ground state of $F$, then we have the following conclusions:
\begin{itemize}
  \item (\emph{Regularity, symmetry}) $u$ belongs to $L^1(\mathbb R^n)\cap C^{\infty}(\mathbb R^n)$, $u$ is either positive or negative and there exists $x_0\in \mathbb R^n$ and a monotone function $v\in C^{\infty}(0,\infty)$ such that $u(x)=v(x-x_0)$ for all $x\in \mathbb R^n$ (see i.e. \cite[Theorem 3]{MorVSJFA2013});
  \item (\emph{Decay property}) $u$ is exponentially decaying and there exists $c>0$ such that
\begin{equation}\label{e:Uasymptotics}
u(x)=(c+o(1))|x|^{-\frac{n-1}{2}}\exp\left(-\int_{\nu}^{|x|}\sqrt{1-\frac{\nu^{n-2}}{s^{n-2}}}ds\right)\quad \mbox{as }x\to \infty,
\end{equation}
where $\nu^{n-2}=\frac{\Gamma((n-2)/2)}{4\pi^{n/2}}\int_{\mathbb R^n}u^2$ (see i.e. \cite[Theorem 4]{MorVSJFA2013}).
\end{itemize}
Moreover, there exists one and only one radial positive solution to (\ref{e:ug}). Hence up to translations, the set of ground states has a \emph{unique} element (\cite{Wang2016,Ma2009}.

Throughout this paper, we will denote by $U$ the unique positive radial ground state of (\ref{e:ug}).

\subsection{Nondegeneracy}
Our first result is to verify that for $n=3,4,5$, the ground state $U$ is nondegenerate up to translations. To be more precise, we have
\begin{theorem}\label{t:nondegeneracy}
Assume $n=3,4,5$. Let $L$ be the linearised operator at $U$ given by
\begin{equation}\label{e:linearised}
L\varphi:=-\Delta \varphi+\varphi-(I_{2}\ast U^2)\varphi-2(I_{2}\ast (U\varphi))U,\quad \forall \varphi\in H^2(\mathbb R^n).
\end{equation}
Then
$${\rm ker}\,L={\rm span}\{\partial_{x_1} U,\partial_{x_2} U,\cdots,\partial_{x_n} U\}.$$
\end{theorem}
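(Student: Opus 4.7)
The inclusion $\mathrm{span}\{\partial_{x_1}U,\ldots,\partial_{x_n}U\}\subset\ker L$ follows by differentiating the Hartree equation \eqref{e:ug} with respect to $x_i$ and using the bilinearity of the convolution; the resulting identity is precisely $L(\partial_{x_i}U)=0$. For the reverse inclusion, the plan is to exploit the rotational symmetry of $L$ (inherited from the radiality of $U$) via a spherical harmonic decomposition. Any $\varphi\in\ker L$ can be expanded as
\begin{equation*}
\varphi(x)=\sum_{k\ge 0}\sum_{m}\varphi_{k,m}(r)Y_{k,m}(\theta),\qquad r=|x|,\ \theta=x/|x|,
\end{equation*}
with $\{Y_{k,m}\}$ an orthonormal basis of spherical harmonics on $\mathbb S^{n-1}$. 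The multipole expansion of the Newton kernel,
\begin{equation*}
\frac{1}{|x-y|^{n-2}}=\sum_{k\ge 0}\frac{r_<^k}{r_>^{k+n-2}}C_k^{(n-2)/2}\!\left(\frac{x\cdot y}{|x||y|}\right),
\end{equation*}
together with the Gegenbauer addition theorem, guarantees that convolution with $I_2$ preserves each spherical sector. Consequently $L\varphi=0$ decouples into the family of radial integro-differential equations $L_k\varphi_{k,m}=0$ with
\begin{equation*}
L_k\psi=-\psi''-\tfrac{n-1}{r}\psi'+\tfrac{k(k+n-2)}{r^2}\psi+\psi-(I_2\ast U^2)(r)\psi-2U(r)\,\mathcal T_k[U\psi](r),
\end{equation*}
where $\mathcal T_k[f](r)=c_{k,n}\int_0^\infty\frac{r_<^k}{r_>^{k+n-2}}f(s)s^{n-1}\,ds$ and $c_{k,n}\propto 1/(2k+n-2)$ is the mode coefficient of the addition theorem. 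Since $\partial_{x_i}U=(x_i/r)U'(r)$ lives in the $k=1$ sector, the theorem reduces to establishing $\ker L_0=\{0\}$, $\ker L_1=\mathrm{span}\{U'\}$, and $\ker L_k=\{0\}$ for all $k\ge 2$.

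For the translation sector $k=1$, one already has $U'\in\ker L_1$. To see it spans the full kernel, I would perform a second-order ODE / Wronskian analysis of $L_1\psi=0$: any solution linearly independent from $U'$ must violate the decay or regularity imposed by $H^2(\mathbb R^n)$, which can be excluded using the strict sign $U'(r)<0$ for $r>0$ together with the exponential asymptotic \eqref{e:Uasymptotics}. For the higher sectors $k\ge 2$, a monotonicity argument is available: the centrifugal term $k(k+n-2)/r^2$ strictly dominates its value at $k=1$, while both $c_{k,n}$ and the integration kernel $r_<^k/r_>^{k+n-2}$ decrease in $k$. This yields an estimate of the form $\langle L_k\psi,\psi\rangle\ge\langle L_1\psi,\psi\rangle+\delta_k\int_0^\infty \psi(r)^2 r^{n-3}\,dr$ for some $\delta_k>0$; combined with the definite sign of the unique zero mode of $L_1$, this forces $\ker L_k=\{0\}$.

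The main obstacle is the radial sector $k=0$, where most of the technical effort is expected. The Hartree equation is not scale invariant, so no zero mode of $L_0$ arises from scaling, and the nonlocal self-coupling $U\cdot\mathcal T_0[U\psi]$ obstructs a direct Sturm comparison. My plan is to invoke the uniqueness of the radial positive ground state cited from \cite{Wang2016,Ma2009}: a nontrivial $\varphi_0\in\ker L_0$ would, via the implicit function theorem applied to the $L^2$-constrained variational problem defining $U$, produce a one-parameter family of distinct radial positive ground states through $U$, contradicting uniqueness up to translations. Carrying this out rigorously requires a careful spectral description of $L_0$ on the radial subspace, in particular locating precisely its single negative direction arising from the Lagrange multiplier and showing that the constrained operator is invertible on the orthogonal complement of the mass direction; this is the heart of the argument.
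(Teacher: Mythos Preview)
Your overall architecture (spherical-harmonic decomposition, treating the sectors $k=0$, $k=1$, $k\ge 2$ separately, and comparing $L_k$ with $L_1$ via the monotonicity of the multipole kernel) matches the paper. However, two of your sector arguments have genuine gaps.

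\textbf{The $k=1$ and $k\ge 2$ sectors: the missing Perron--Frobenius step.} The equation $L_1\psi=0$ is \emph{not} a second-order ODE---the term $U\,\mathcal T_1[U\psi]$ is a genuine nonlocal integral operator---so a Wronskian/linear-independence argument does not apply, and there is no a priori reason the $L^2$-kernel of $L_1$ should be at most one-dimensional. Likewise, your $k\ge 2$ comparison $\langle L_k\psi,\psi\rangle\ge\langle L_1\psi,\psi\rangle+\delta_k\int\psi^2 r^{n-3}$ only kills the kernel once you know both that $L_1\ge 0$ (i.e.\ that $0$ is the \emph{lowest} eigenvalue of $L_1$, not just some eigenvalue) and that the bottom eigenfunction $\phi_{k,0}$ of $L_k$ is strictly positive (otherwise the kernel-difference term $\frac{1}{n}\frac{r_<}{r_>^{n-2}}-\frac{1}{2k+n-2}\frac{r_<^k}{r_>^{k+n-2}}$ need not contribute positively). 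The paper supplies exactly this missing ingredient: it proves that $(L_k+\lambda)^{-1}$ is positivity improving on $L^2(\mathbb R_+,r^{n-1}dr)$ for $\lambda\gg 1$ (via a Neumann series, using that $-(J+P_k)$ is positivity improving), and then invokes the Perron--Frobenius theorem to conclude that $\lambda_{k,0}$ is simple with strictly positive eigenfunction. From this, $U'<0$ forces $\lambda_{1,0}=0$ and $\ker L_1=\mathrm{span}\{U'\}$, and the $k\ge 2$ comparison then goes through with $\phi_{k,0}>0$.

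\textbf{The radial sector $k=0$.} Your proposed argument is not sound as stated: a nontrivial element of $\ker L_0$ means the implicit function theorem \emph{fails} at $U$; it does not by itself manufacture a one-parameter family of distinct positive radial ground states of \eqref{e:ug}. To extract a nearby family one would need a bifurcation-type argument with a transversality condition, and even then the resulting solutions would typically solve a \emph{different} equation (with a varying frequency or mass), so the cited uniqueness for \eqref{e:ug} gives no contradiction. The paper takes a completely different, direct route: it splits $L=\Upsilon-\Xi$ with $\Xi$ a rank-one operator $\Xi\varphi=2\big(\int U\varphi\,|x|^{2-n}\big)U$ and $\Upsilon$ a Schr\"odinger operator plus a \emph{Volterra} integral perturbation $W$. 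An ODE argument (comparison of $v/U$ using the positivity of the Volterra kernel) shows that any radial solution of $\Upsilon v=0$ with $v(0)\neq 0$ grows like $e^{\varrho r}$, hence is not in $L^2$. The key algebraic identity $L(2U+rU')=-2U$ (Lemma~\ref{l:l2u+ru'}) then provides an explicit $L^2$ particular solution of $\Upsilon w=\Xi\varphi$, and subtracting it forces any hypothetical $\varphi\in\ker L_0$ to produce a nontrivial $L^2$ solution of $\Upsilon v=0$, a contradiction. This is the substantive content you are missing in the radial sector.
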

\begin{remark}
(1) When $n=3$, such kind of nondegeneracy results was obtained by \cite{WeiWin2009, Lenzmann2009}.
For the case $n=3$ and $p>2$ close to $2$, the nondegeneracy was also verified by \cite{Xiang2015}.

(2) Here the Sobolev space $H^2(\mathbb R^n)$ consists of real-valued functions. For the complex-valued case, the nondegeneracy can be showed by splitting the linearized operator into real and imaginary parts (see \cite{Lenzmann2009}). Without loss of generality, we restrict our investigations in the real Sobolev spaces.
\end{remark}

For various nonlinear Schr\"odinger equations with local nonlinearities, the nondegeneracy of ground states is widely investigated. It is a key ingredient in the stability analysis of solitary waves and the Lyapunov-Schmidt reduction method of constructing semiclassical solutions. The classical method to obtain the nondegeneracy relies on the Sturm-Liouville theory (see i.e. \cite{Wein1985}). For the equations containing nonlocal terms, the problem may become more involved (see i.e. \cite{Amick&Toland91,FL:UNGS,Frank2016,WeiWin2009,Lenzmann2009}).

For the Hartree equation (\ref{e:ug}) in $\mathbb R^3$, the nondegeneracy of the ground states was proved by different methods. J. Wei and M. Winter \cite{WeiWin2009} analyze the Schr\"odinger-Newton equation (\ref{e:sn}) (which is equivalent to (\ref{e:ug})) instead. This approach has an advantage that the Schr\"odinger-Newton equation is local. Therefore, the equation can be reduced to a series of ODE systems. In \cite{Lenzmann2009}, E. Lenzmann finds a method relied on spectral analysis of the linearized operators at the ground states. One of the most important ingredients is the expansion of the nonlocal term by spherical harmonics which is related to the classical multipole expansion of Newton potential in $\mathbb R^3$. Both of the two methods are based on expansion of the equations by spherical harmonics.

To show the nondegeneracy for the case $n\ge 3$, one of key point is to expand the nonlocal term in (\ref{e:ug}) by spherical harmonics.
Let $\mathcal Y_k={\rm span}\{Y_{km}\}_{m\in M_k}$ be the finite dimensional space of real-valued spherical harmonics of degree $k$. Here $M_k=\{1,2,\cdots,\dim \mathcal Y_k\}$, and for $m,m_1,m_2\in M_k$,
\begin{equation*}
-\Delta_{\mathbb S^{n-1}}Y_{km}=k(k+n-2)Y_{km},\quad \langle Y_{km_1},Y_{km_2}\rangle_{L^2(\mathbb S^{n-1})}=\left\{\begin{array}{cc}
                                                                                                                      1, & \mbox{ if }m_1=m_2, \\
                                                                                                                      0, & \mbox{ if }m_1\ne m_2,
                                                                                                                    \end{array}
\right.
\end{equation*}
where $\Delta_{\mathbb S^{n-1}}$ is the Laplacian on $\mathbb S^{n-1}$.

For all $\varphi\in C_0^{\infty}(\mathbb R^n)$, we expanse
\begin{equation}\label{e:npc1}
(I_2\ast\varphi)(x)=\sum_{k=0}^{\infty}\sum_{m\in M_k}\frac{1}{2k+n-2}\left(\int_{0}^{\infty}\frac{(r\wedge \rho)^{k}}{(r\vee \rho)^{k+n-2}}f_{km}(\rho)\rho^{n-1}d\rho\right)Y_{km}(\theta).
\end{equation}
Here $r\wedge \rho=\min\{r,\rho\}$, $r\vee \rho=\max\{r,\rho\}$, $r=|x|$, $\theta=x/|x|$ and $\varphi(x)=\varphi(r,\theta)=\sum_{k=0}^{\infty}\sum_{m\in M_k}f_{km}(r)Y_{km}(\theta)$ is the expansion of $\varphi$ by spherical harmonics.

Using the expansion formula \eqref{e:npc1}, we can prove the nondegeneracy of ground state by the method in \cite{Lenzmann2009} with some modifications.

\subsection{Semiclassical solutions}

As an application of the nondegeneracy of ground state, we construct multiple semiclassical solutions to a class of Hartree type equation with potentials as follows
\begin{equation} \label{e:namain}
-\varepsilon^2\Delta u+u+V(x)u=\varepsilon^{-2}(I_{2}\ast u^2)u\quad \mbox{ in }\mathbb R^n.
\end{equation}
Here $V:\mathbb R^n\to \mathbb R$ an external potential satisfying certain assumptions (see condition ${\bf (V)}$ below).
Let $B_R(0)$ be the ball in $\mathbb R^n$ centered at $0$ with radius $R$, and let
$$C^{2}_{b}(\mathbb R^n):=\{V\in C^{2}(\mathbb R^n)\,|\, D^J V \mbox{ is bounded in }\mathbb R^n \mbox{ for all multi-index } J\mbox{ such that }|J|\le 2\}.$$
We set
\begin{itemize}
  \item[$\bf (V)$:] $V\in C^{2}_{b}(\mathbb R^n)$, $\inf_{x\in \mathbb R^n}(1+V(x))>0$.
\end{itemize}

\begin{theorem}\label{t:multiple}
Let $n=3,4,5$. Assume that $V$ satisfies condition $\bf (V)$ and has a nondegenerate smooth compact critical manifold $M$. Then for $\varepsilon>0$ small, Equation (\ref{e:namain}) has at least $l(M)$ solutions concentrating near points of $M$. Here $l(M)$ denotes the cup length of $M$.
\end{theorem}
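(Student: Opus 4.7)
The plan is to carry out a finite-dimensional Lyapunov-Schmidt reduction along the manifold of translates of the rescaled ground state $U$, a strategy that is by now standard in semiclassical analysis. Setting $u(x) = w\big((x-\xi)/\varepsilon\big)$ transforms (\ref{e:namain}) into
\begin{equation*}
-\Delta w + w + V(\xi+\varepsilon y)w = (I_2\ast w^2)w \quad \text{in }\mathbb R^n,
\end{equation*}
and the associated energy becomes $\varepsilon^n\big[F(w) + \tfrac12\int V(\xi+\varepsilon y)w^2\big]$, i.e.\ the Hartree functional $F$ perturbed by a potential term. The natural approximate-solution manifold is
\[
Z_\varepsilon := \big\{z_{\xi,\varepsilon}(x) := U((x-\xi)/\varepsilon)\,:\, \xi\in\mathbb R^n\big\},
\]
whose tangent space at $z_{\xi,\varepsilon}$ is spanned by $\partial_{\xi_i} z_{\xi,\varepsilon}$, $i=1,\dots,n$.

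I would then decompose $u = z_{\xi,\varepsilon}+\phi$ with $\phi \perp T_{z_{\xi,\varepsilon}} Z_\varepsilon$ in $H^1(\mathbb R^n)$. Theorem~\ref{t:nondegeneracy} yields, after rescaling, a uniform invertibility estimate $\|(P_\perp L_{\xi,\varepsilon} P_\perp)^{-1}\|\le C$ independent of small $\varepsilon$ and of $\xi$ in compact sets. A contraction-mapping argument on a small ball of $(T_{z_{\xi,\varepsilon}} Z_\varepsilon)^\perp$ then produces a unique smooth correction $\phi=\phi(\xi,\varepsilon)$ of size $O(\varepsilon)$ such that $\nabla J_\varepsilon(z_{\xi,\varepsilon}+\phi)\perp T_{z_{\xi,\varepsilon}} Z_\varepsilon$.

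Setting $\Phi_\varepsilon(\xi):=J_\varepsilon(z_{\xi,\varepsilon}+\phi(\xi,\varepsilon))$, critical points of $\Phi_\varepsilon$ correspond to genuine solutions of (\ref{e:namain}). Using that $U$ is a critical point of $F$, Taylor-expanding $V(\xi+\varepsilon y)$ around $\xi$, and estimating the nonlocal cross terms via Hardy-Littlewood-Sobolev together with (\ref{e:Uasymptotics}), one obtains the expansion
\begin{equation*}
\Phi_\varepsilon(\xi) = \varepsilon^n\big[C_0 + C_1 V(\xi) + o_\varepsilon(1)\big]
\end{equation*}
in the $C^1$ topology, uniformly on compact sets, with $C_0,C_1>0$ depending only on $U$. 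Thus in a tubular neighborhood of $M$ the rescaled functional $\varepsilon^{-n}\Phi_\varepsilon$ is a $C^1$-small perturbation of $C_0+C_1V$, which admits $M$ as a nondegenerate critical manifold. A standard Lusternik-Schnirelmann argument for perturbations of nondegenerate critical manifolds (see, e.g., Ambrosetti-Malchiodi, \emph{Perturbation Methods and Semilinear Elliptic Problems on $\mathbb R^n$}) then delivers at least $l(M)$ critical points near $M$, yielding the claimed multiplicity.

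The main technical difficulty is to handle the nonlocal convolutions uniformly in $\varepsilon$ and $\xi$. Specifically, cross terms such as $(I_2\ast(U\phi))U$, $(I_2\ast U^2)\phi$ and their $\xi$-derivatives must be controlled with constants independent of $\varepsilon$; here the exponential decay (\ref{e:Uasymptotics}) of $U$ and of $\partial_{x_i}U$ compensates for the slow $|x|^{-(n-2)}$ decay of the Newton potential. A secondary subtlety is verifying the $C^1$-smallness of the remainder in the expansion of $\Phi_\varepsilon$, which requires differentiating the implicit function $\phi(\xi,\varepsilon)$ in $\xi$ and re-invoking the uniform invertibility of $L_{\xi,\varepsilon}$ at each step.
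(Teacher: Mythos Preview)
Your overall strategy---Lyapunov--Schmidt reduction onto a manifold of translated ground states, followed by a Lusternik--Schnirelmann count on the reduced functional---is the same as the paper's. However, your choice of approximate solution creates a genuine gap. You take $z_{\xi,\varepsilon}(x)=U((x-\xi)/\varepsilon)$, i.e.\ the bare ground state of the \emph{free} Hartree equation. In the rescaled variable $y=(x-\xi)/\varepsilon$ your own equation reads $-\Delta w+w+V(\xi+\varepsilon y)w=(I_2\ast w^2)w$, so the residual at $w=U$ is $V(\xi+\varepsilon y)\,U(y)$. Its projection onto $(T_{z_{\xi,\varepsilon}}Z_\varepsilon)^\perp$ contains $V(\xi)\,U$, which is $O(1)$ in $H^1$ whenever $V(\xi)\neq 0$; it is \emph{not} $O(\varepsilon)$. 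Consequently the fixed-point map does not contract on a ball of radius $O(\varepsilon)$, the correction $\phi$ cannot be of size $O(\varepsilon)$, and the claimed $C^1$ expansion $\Phi_\varepsilon(\xi)=\varepsilon^n[C_0+C_1V(\xi)+o_\varepsilon(1)]$ is unjustified.

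The paper avoids this by building the value of $V$ at the concentration point into the ansatz: with $\alpha(\varepsilon\xi)=1+V(\varepsilon\xi)$ and $\beta=\alpha^{1/2}$, the rescaled profile $z_\xi(x)=\alpha(\varepsilon\xi)\,U\big(\beta(\varepsilon\xi)(x-\xi)\big)$ is an \emph{exact} solution of the frozen-coefficient equation $-\Delta v+(1+V(\varepsilon\xi))v=(I_2\ast v^2)v$. The residual then becomes $(V(\varepsilon x)-V(\varepsilon\xi))z_\xi$, which is genuinely $O(\varepsilon|\nabla V(\varepsilon\xi)|+\varepsilon^2)$ (Lemma~\ref{l:dfvz}), and the contraction in Lemma~\ref{l:ncontraction} goes through. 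This modified ansatz also explains why the leading term of $\Phi_\varepsilon$ is $C_1(1+V(\varepsilon\xi))^{3-n/2}$ rather than an affine function of $V$; the two have the same critical points, but only the former arises from a reduction with small correction. To repair your argument you must replace $U((x-\xi)/\varepsilon)$ by this $V$-adapted profile (or, equivalently, assume $V$ is uniformly small, which is not part of hypothesis~$\mathbf{(V)}$).
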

Here we say a critical manifold $M$ of $V$ is nondegenerate if, for every $x\in M$, the kernel of $D^2f(x)$ equals to $T_x M$ (\cite{Bo:AM57}).
For the definition of cup length see (\ref{e:cupl}) below.

The existence of semiclassical solutions to Hartree type equations (Schr\"odinger-Newton equations, Choquard equations) with external potentials in $\mathbb R^3$ was obtained by \cite{WeiWin2009} via a Lyapunov-Schmidt reduction argument. To be more precise, they proved that if $V\in C^2(\mathbb R^3)$ has nondegenerate critical points and $\inf_{x\in \mathbb R^3}(1+V(x))>0$, then there exists a family of multibump positive solutions concentrating around those critical points. Related results for other potentials were also investigated. For example, \cite{Secchi2010} showed the existence of semiclassical solutions for vanishing potential under assumption that $\lim_{|x|\to\infty}(1+V(x))|x|^{s}>0$ with $s\in [0,1)$, and \cite{Macri2009,Nolasco2010} verified the case of period potentials.

 An alternative approach to obtain semiclassical solutions is variational penalisation method which does not depend on the nondegeneracy of ground states (see i.e. \cite{BYEON2002,Byeon2003,DELPINO1997,Moroz2009}). Recently, this method has been widely used to construct semiclassical solutions of Choquard type equations with various potentials and nonlinearities. See for example \cite{cingolani2010,Yang2013,Yang2013b,Alves20144133,Alves2014,MorVS2015,Alves2016,Cingolani2015,Alves2015,alves2017singularly,cingolani2017semi} and the references therein.

Our proof of Theorem \ref{t:multiple} is based on a Lyapunov-Schmidt reduction argument introduced by \cite{AmMalSe2001}. Comparing with the local Schr\"odinger equations as in \cite{AmMalSe2001}, more involved analysis is needed to deal the nonlocal term in (\ref{e:namain}). This kind of situations also appears in the equations containing other nonlocal operators such as fractional Laplacian (see i.e. \cite{Davila2014,CZ:CPFSE,Davila2015,Chen2015}). For more details see Section \ref{s:application} below.

The paper is organized as follows. Section \ref{s:preliminaries} contains some preliminaries for further applications. In Section \ref{s:nondegeneracy}, we sketch the proof of the nondegeneracy of ground states (Theorem \ref{t:nondegeneracy}). Section \ref{s:application} is devoted to the applications of the nondegeneracy result to construct multiple semiclassical solutions (Theorem \ref{t:multiple}).

\subsection{Notations}
Finally, we list some notations.
\begin{itemize}
  \item $B_R(x)$ denotes the ball centered at $x$ with radius $R>0$ in $\mathbb R^n$ and $B_R^c(x):=\mathbb R^n\setminus B_R(x)$.
  \item $\mathbb S^{n-1}$ is the unit sphere in $\mathbb R^n$ and $|\mathbb S^{n-1}|$ denotes the $(n-1)$-dimensional volume of $\mathbb S^{n-1}$.
  \item $\langle\cdot,\cdot\rangle$ is the inner product in $H^1(\mathbb R^n)$ and $\|\cdot\|$ denotes the corresonding norm, that is,
  \begin{equation*}
  \|u\|:=\left(\int_{\mathbb R^n}|\nabla u|^2+u^2\right)^{\frac{1}{2}},\quad\forall u\in H^1(\mathbb R^n).
  \end{equation*}
  \item $U$ is the unique positive radial ground state for the Hartree equation (\ref{e:ug}).
  \item $L^2_{\rm rad}(\mathbb R^n):=\{\varphi\in L^2(\mathbb R^n)\,|\,\varphi \mbox{ is radially symmetric}\}$.
\end{itemize}

%\newpage

\section{Preliminaries}\label{s:preliminaries}
In this section, we give some basic facts for further applications.

\begin{lemma}\label{l:newton}
For any radial function $f=f(|x|)$ in $L^1(\mathbb R^n,(1+|x|)^{-(n-2)}dx)$, it holds that
\begin{equation}\label{e:nonlocal-potential}
(I_{2}\ast f)(r)=\int_{\mathbb R^n}I_{2}(|x|)f(|x|)-\int_{0}^r K(r,\rho)f(\rho)d\rho,
\end{equation}
where $r=|x|$ and
\begin{equation}\label{e:k}
K(r,\rho)=\frac{\rho}{n-2}\left(1-\frac{\rho^{n-2}}{r^{n-2}}\right),\quad \mbox{for } \rho\le r.
\end{equation}
\end{lemma}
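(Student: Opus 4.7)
The plan is to reduce the convolution to a one-dimensional radial integral via Newton's shell theorem, then split the resulting integral at $\rho = r$ and rearrange it to match the claimed form.

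First I would write out the convolution using polar coordinates $y = \rho\theta$ with $\theta \in \mathbb{S}^{n-1}$:
\begin{equation*}
(I_2 \ast f)(x) = \frac{1}{(n-2)|\mathbb{S}^{n-1}|}\int_0^\infty \rho^{n-1} f(\rho) \left(\int_{\mathbb{S}^{n-1}} \frac{1}{|x - \rho\theta|^{n-2}}\, d\sigma(\theta)\right) d\rho.
\end{equation*}
The inner spherical average is precisely the classical Newton shell identity, namely
\begin{equation*}
\frac{1}{|\mathbb{S}^{n-1}|}\int_{\mathbb{S}^{n-1}} \frac{d\sigma(\theta)}{|x - \rho\theta|^{n-2}} = \frac{1}{\max(r,\rho)^{n-2}}, \qquad r=|x|.
\end{equation*}
This is the $k=0$ coefficient in the expansion of Theorem \ref{p:qm} (taking $\varphi$ to be the surface measure on $\{|y|=\rho\}$), so it comes essentially for free from what has already been proved. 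If I wanted a self-contained derivation at this point of the paper, I would instead obtain it directly by noting that $|x-\rho\theta|^{-(n-2)}$ is harmonic in $x$ on $\{|x|<\rho\}$ and invoking the mean value property, combined with the fact that the spherical average is radial and has the correct behaviour as $r\to\infty$.

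Substituting the shell identity gives the one-dimensional representation
\begin{equation*}
(I_2 \ast f)(r) = \frac{1}{n-2}\int_0^\infty \frac{\rho^{n-1}}{\max(r,\rho)^{n-2}}\, f(\rho)\, d\rho.
\end{equation*}
Splitting at $\rho = r$ and using that $\int_{\mathbb{R}^n} I_2(|x|) f(|x|)\,dx = \frac{1}{n-2}\int_0^\infty \rho f(\rho)\,d\rho$, I add and subtract $\int_0^r \rho f(\rho)\,d\rho$ inside the integral for $\rho<r$ to produce
\begin{equation*}
\frac{1}{(n-2)r^{n-2}}\int_0^r \rho^{n-1} f(\rho)\,d\rho + \frac{1}{n-2}\int_r^\infty \rho f(\rho)\,d\rho = \frac{1}{n-2}\int_0^\infty \rho f(\rho)\,d\rho - \int_0^r K(r,\rho) f(\rho)\,d\rho,
\end{equation*}
with $K(r,\rho)$ as defined in (\ref{e:k}). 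This yields the claimed identity.

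The step requiring the most care is not algebraic but rather the justification of absolute convergence and the application of Fubini's theorem under the hypothesis $f \in L^1(\mathbb{R}^n,(1+|x|)^{-(n-2)}dx)$; this weight is exactly what guarantees that both $\int_r^\infty \rho f(\rho)\,d\rho$ and $\frac{1}{r^{n-2}}\int_0^r \rho^{n-1} f(\rho)\,d\rho$ are finite, so the splitting and the appearance of the global term $\int_{\mathbb{R}^n} I_2(|x|) f(|x|)\,dx$ are legitimate. Everything else is routine manipulation.
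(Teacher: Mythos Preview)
Your argument is correct and is essentially the same as the paper's: both reduce the convolution via the Newton shell identity (the paper derives the two cases $\rho<r$ and $\rho>r$ separately from the mean value property, which is exactly the self-contained route you outline) and then rearrange the resulting radial integral into the form involving $K(r,\rho)$. The only caveat is that your appeal to Theorem~\ref{p:qm} is a forward reference, since Lemma~\ref{l:newton} precedes it in the paper; but you already anticipated this and gave the direct harmonic-function justification, which matches the paper's own proof.
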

The conclusion of this lemma can be considered as a detailed representation of the Newton's theorem for dimension $n$ (see \cite[Theorem 9.7]{LieLosAnalysis}).
\begin{proof}
It is clear that $I_{2}\ast f$ is radially symmetric. We compute
\begin{eqnarray*}
(I_{2}\ast f)(x)&=&\frac{1}{(n-2)|\mathbb S^{n-1}|}\int_{\mathbb R^n}\frac{f(y)}{|x-y|^{n-2}}dy\\
&=&\frac{1}{(n-2)|\mathbb S^{n-1}|}\int_{B_r(0)}\frac{f(y)}{|x-y|^{n-2}}dy+\frac{1}{(n-2)|\mathbb S^{n-1}|}\int_{B_r^c(0)}\frac{f(y)}{|x-y|^{n-2}}dy,\notag
\end{eqnarray*}
where $r=|x|$.
Since $\frac{1}{|x-y|^{n-2}}$ is harmonic for $x\ne y$, by mean-value formula we have
\begin{eqnarray*}
\int_{B_r}\frac{f(y)}{|x-y|^{n-2}}dy&=&\int_{0}^r\int_{\mathbb S^{n-1}}\frac{f(\rho)\rho^{n-1}}{|x-\rho \omega|^{n-2}}d\omega d\rho\notag\\
&=&|\mathbb S^{n-1}|\int_{0}^rf(\rho)\rho^{n-1}\left(|\mathbb S^{n-1}|^{-1}\int_{\mathbb S^{n-1}}\frac{1}{|x-\rho \omega|^{n-2}}d\omega\right)d\rho \notag\\
&=&\frac{1}{|x|^{n-2}}\int_{B_r}f(y)dy.
\end{eqnarray*}
On the other hand, let
\begin{eqnarray*}
b(x):=\int_{B_r^c}\frac{f(y)}{|x-y|^{n-2}}dy,\quad x\in B_r(0).
\end{eqnarray*}
Then $b$ is harmonic in $B_r(0)$ and radially symmetric. This yields that $b$ is a constant in $\bar B_r(0)$. In particular,
\begin{equation*}
b(x)=b(0)=\int_{B_r^c(0)}\frac{f(y)}{|y|^{n-2}}dy.
\end{equation*}
It follows that
\begin{eqnarray*}
(I_{2}\ast f)(x)&=&\frac{1}{(n-2)|\mathbb S^{n-1}|}\left(\frac{1}{|x|^{n-2}}\int_{B_r}f(y)dy+\int_{B_r^c}\frac{f(y)}{|y|^{n-2}}dy\right)\\
&=&\frac{1}{(n-2)|\mathbb S^{n-1}|}\left(\int_{B_r}\left(\frac{1}{|x|^{n-2}}-\frac{1}{|y|^{n-2}}\right)f(y)dy+\int_{\mathbb R^n}\frac{f(y)}{|y|^{n-2}}dy\right)\notag\\
&=&\frac{1}{(n-2)|\mathbb S^{n-1}|}\left(|\mathbb S^{n-1}|\int_0^r\rho\left(\frac{\rho^{n-2}}{r^{n-2}}-1\right)f(\rho)d\rho+\int_{\mathbb R^n}\frac{f(y)}{|y|^{n-2}}dy\right).\notag
\end{eqnarray*}
Thus (\ref{e:nonlocal-potential}) holds. This completes the proof.
\end{proof}

Note that the ground state $U$ decays exponentially, from the lemma \ref{l:newton} we have that $U\varphi$ belongs to $L^1(\mathbb R^n,(1+|x|)^{-(n-2)}dx)$ for any $\varphi\in L^2_{\rm rad}(\mathbb R^n)$. Thus
\begin{equation*}
(I_{2}\ast |U|^2)(r)=\int_{\mathbb R^n}I_{2}(|x|)|U|^2(|x|)-\int_{0}^r K(r,\rho)|U|^2(\rho)d\rho,
\end{equation*}
and
\begin{equation*}
(I_{2}\ast (U\varphi))(r)=\int_{\mathbb R^n}I_{2}(|x|)(U\varphi)(|x|)-\int_{0}^r K(r,\rho)(U\varphi)(\rho)d\rho.
\end{equation*}

We investigate the decay property of $U'$.
\begin{lemma}\label{l:decayu'}
Assume $\tau$ is an arbitrary number in $(0,1)$. Then
\begin{equation}\label{e:u'decay}
|U'|\le Ce^{-\tau r},
\end{equation}
where $r=|x|$ and $C$ is a positive constant depending on $\tau$. As a consequence, we have
\begin{equation}\label{e:pu'decay}
|\partial_{x_i}U|\le Ce^{-\tau r},\quad i=1,2,\cdots,n.
\end{equation}
\end{lemma}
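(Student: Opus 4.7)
The plan is to exploit the radial form of equation \eqref{e:ug} to obtain an explicit integral representation of $U'$, and then transfer the known exponential decay of $U$ onto $U'$. Writing $U = U(r)$, equation \eqref{e:ug} becomes
$$U''(r) + \frac{n-1}{r} U'(r) = (1 - h(r)) U(r), \qquad h(r) := (I_2 \ast U^2)(r),$$
and multiplying through by $r^{n-1}$ rewrites this in divergence form as $(r^{n-1} U'(r))' = r^{n-1}(1 - h(r)) U(r)$. By Lemma \ref{l:newton} together with the exponential decay of $U$, the function $h$ is bounded on $[0, \infty)$ with $h(r) = O(r^{-(n-2)})$ at infinity, so the right-hand side is integrable on $[1, \infty)$ and the limit $\ell := \lim_{r \to \infty} r^{n-1} U'(r)$ exists.

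Next I would rule out $\ell \ne 0$: if $\ell$ were nonzero, then $U'(r) \sim \ell / r^{n-1}$ for large $r$, and integrating would force $U$ to decay only polynomially, like $r^{-(n-2)}$, contradicting the exponential decay \eqref{e:Uasymptotics}. With $\ell = 0$ in hand, integrating the divergence-form identity from $r$ to $\infty$ produces
$$-r^{n-1} U'(r) = \int_r^\infty s^{n-1}(1 - h(s)) U(s)\, ds,$$
so $|U'(r)| \le C r^{-(n-1)} \int_r^\infty s^{n-1} U(s)\, ds$ with $C$ a uniform bound for $|1 - h|$ on $[0,\infty)$. Given $\tau \in (0,1)$, I would pick $\tau' \in (\tau, 1)$ and invoke \eqref{e:Uasymptotics} to obtain $U(s) \le C_{\tau'} e^{-\tau' s}$ for all $s \ge 0$; a routine integration by parts gives $\int_r^\infty s^{n-1} e^{-\tau' s}\, ds \le C' r^{n-1} e^{-\tau' r}$ for $r$ large, and substituting yields $|U'(r)| \le C'' e^{-\tau' r} \le C'' e^{-\tau r}$ for $r$ large. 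Since $U'$ is continuous on $[0,\infty)$, this extends to all $r \ge 0$ after enlarging the constant, establishing \eqref{e:u'decay}.

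The consequence \eqref{e:pu'decay} then follows immediately: radiality of $U$ yields $\partial_{x_i} U(x) = U'(|x|)\, x_i/|x|$, and since $|x_i|/|x| \le 1$ one gets $|\partial_{x_i} U(x)| \le |U'(|x|)| \le C e^{-\tau |x|}$. The main obstacle will be the vanishing of $\ell$; here one must carefully compare the hypothetical polynomial asymptotic $U' \sim \ell r^{-(n-1)}$ against the exponential decay of $U$ coming from \eqref{e:Uasymptotics}. The remaining ingredients are straightforward exponential/polynomial estimates. An alternative route would be to differentiate the equation and treat $\partial_{x_i} U$ as a solution of $L\phi = 0$, then apply a comparison principle using the fact that the coefficients of $L$ tend to their limit values at infinity; the ODE argument above seems more economical.
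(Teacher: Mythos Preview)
Your argument is correct and in fact takes a cleaner, more self-contained route than the paper. The paper differentiates the equation with respect to $r$ to obtain a second-order elliptic inequality for $-U'$ of the form
\[
-\Delta(-U')+(1-\epsilon)(-U')\le C_1 e^{-\tau|x|}\quad\text{in }B_R^c(0),
\]
and then invokes an external comparison principle (Proposition~6.3 of \cite{MorVSJFA2013}) with the supersolution $Ce^{-\tau|x|}$ to conclude. By contrast, you exploit the first-order divergence structure $(r^{n-1}U')'=r^{n-1}(1-h)U$ of the radial ODE: integrability of the right-hand side gives existence of $\ell=\lim_{r\to\infty}r^{n-1}U'(r)$, the known exponential decay of $U$ forces $\ell=0$, and the resulting integral representation of $U'$ transfers the decay of $U$ directly onto $U'$ with nothing beyond elementary estimates. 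Your approach avoids the appeal to a comparison principle and is arguably the more economical proof for this specific radial setting; the paper's PDE/comparison viewpoint, on the other hand, is the one that would adapt more readily to non-radial perturbations or to situations where no first integral is available. Amusingly, the ``alternative route'' you sketch at the end---treating $\partial_{x_i}U$ as a solution of a linear equation and applying a comparison principle---is essentially what the paper actually does.
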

\begin{proof}
Let $U>0$ be the unique positive radial ground state. Then $U'\le 0$.
Taking derivative with respect to $r$ on Equation (\ref{e:ug}), we obtain that
\begin{eqnarray}\label{e:uradial-d1}
-U'''-\frac{n-1}{r}U''+\left(\frac{n-1}{r^2}+1\right)U'=(I_{2}\ast U^2)'U+(I_{2}\ast U^2)U'.
\end{eqnarray}
Again by Lemma \ref{l:newton},
\begin{eqnarray}\label{e:iad1}
(I_{2}\ast U^2)'(r)&=&-K(r,r)U^2(r)-\int_{0}^r\partial_rK(r,\rho)U^2(\rho)d\rho\notag\\
&=&-\frac{1}{r^{n-1}}\int_{0}^r \rho^{n-1} U^2(\rho) d\rho.
\end{eqnarray}
From (\ref{e:uradial-d1}) and the decay property of $U$, we have that
\begin{eqnarray*}
|(I_2\ast U^2)(x)|\le \frac{C}{r^{n-2}}\quad\mbox{ and }\quad\left|\int_{0}^r \rho^{n-1} U^2(\rho) d\rho\right|\le C.
\end{eqnarray*}
Therefore, for any $\epsilon\in (0,1)$ there exists $R>0$ sufficiently large such that for all $x\in B_R^c(0)$
\begin{eqnarray*}
-\Delta (- U')+(1-\epsilon)(-U')\le C_1e^{-\tau|x|},
\end{eqnarray*}
where $C_1$ is a positive constant and $0<\tau\le 1-\epsilon$. It follows that
\begin{eqnarray*}
-\Delta (- U')+(1-\epsilon)(-U')\le -\Delta(C_2e^{-\tau|x|})+(1-\epsilon)C_2e^{-\tau|x|},\quad \mbox{for all }x\in B_R^c(0),
\end{eqnarray*}
where $C$ is some positive constant. That is,
\begin{eqnarray*}
-\Delta (-U'-Ce^{-\tau|x|})+(1-\epsilon)(-U'-Ce^{-\tau|x|})\le 0,\quad \mbox{for all }x\in B_R^c(0).
\end{eqnarray*}
Using the comparison principle in \cite[Proposition 6.3]{MorVSJFA2013}, we have that
\begin{eqnarray*}
- U'-Ce^{-\tau|x|}\le C_3|x|^{-\frac{N-1}{2}}e^{-(1-\epsilon)^{\frac{1}{2}}|x|},\quad \mbox{for all }x\in B_R^c(0).
\end{eqnarray*}
Thus we obtain
\begin{eqnarray*}
- U'\le Ce^{-\tau|x|},\quad \mbox{for all }x\in B_R^c(0),
\end{eqnarray*}
where $\tau\in (0,1-\varepsilon]$. Thus (\ref{e:u'decay}) follows since $\epsilon$ is a arbitrary number in $(0,1)$.

A direct computation yields that
\begin{equation*}
\partial_{x_j}U(r)=U'(r)\frac{x_j}{r}.
\end{equation*}
Then (\ref{e:pu'decay}) holds. This completes the proof.
\end{proof}

\begin{lemma}\label{l:l2u+ru'}
It holds that
\begin{equation}\label{e:l2u+ru'}
L(2U+rU')=-2U.
\end{equation}
\end{lemma}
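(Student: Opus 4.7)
The plan is to exploit a one-parameter scaling symmetry of the Hartree equation and differentiate at the identity. The key observation is that $I_2$ is homogeneous of degree $-(n-2)$, so a direct change of variables yields the identity
$$\bigl(I_2\ast (u(\lambda\cdot))^2\bigr)(x)=\lambda^{-2}(I_2\ast u^2)(\lambda x).$$
Using this together with $-\Delta[u(\lambda\cdot)](x)=\lambda^2(-\Delta u)(\lambda x)$, one checks easily that for every $\lambda>0$ the rescaled function
$$u_\lambda(x):=\lambda^2\,U(\lambda x)$$
satisfies the $\lambda$-deformed equation
$$-\Delta u_\lambda+\lambda^2 u_\lambda=(I_2\ast u_\lambda^2)\,u_\lambda,$$
which at $\lambda=1$ reduces to the equation for $U$.

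Next I differentiate this identity in $\lambda$ and evaluate at $\lambda=1$. A short computation gives
$$\dot u:=\partial_\lambda u_\lambda\big|_{\lambda=1}=2U+x\cdot\nabla U=2U+rU'.$$
Differentiating the left-hand side produces $-\Delta\dot u+2U+\dot u$, the extra $2U$ coming from the coefficient $\lambda^2$ in front of $u_\lambda$. Differentiating the nonlocal right-hand side produces $2(I_2\ast(U\dot u))U+(I_2\ast U^2)\dot u$. Rearranging gives
$$-\Delta\dot u+\dot u-(I_2\ast U^2)\dot u-2(I_2\ast(U\dot u))U=-2U,$$
which is exactly $L(\dot u)=-2U$, i.e.\ $L(2U+rU')=-2U$, as claimed.

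The only nontrivial step is the scaling identity for the convolution; it is a one-line change of variables but drives the whole argument. The remaining issue is simply to justify differentiation under the convolution integral and the commutation of $\partial_\lambda$ with $\Delta$, which is routine thanks to the $C^\infty$ regularity and exponential decay of $U$ recalled in the introduction (formula \eqref{e:Uasymptotics}) and to the decay of $U'$ established in Lemma \ref{l:decayu'}; these ensure that $2U+rU'\in H^2(\mathbb R^n)$ so that $L(2U+rU')$ is meaningful and the formal differentiation is rigorous. A fully equivalent alternative, which avoids any interchange of limits, is to compute $L(2U)$ and $L(rU')$ separately: $L(2U)=-4(I_2\ast U^2)U$ follows from the equation for $U$, while $L(rU')$ can be evaluated directly from the pointwise identities $\Delta(x\cdot\nabla U)=x\cdot\nabla\Delta U+2\Delta U$ and $x\cdot\nabla(I_2\ast f)=I_2\ast(x\cdot\nabla f)+2(I_2\ast f)$ (the latter again following from the homogeneity of $I_2$); adding the two expressions produces $-2U$.
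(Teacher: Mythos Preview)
Your proof is correct and takes a genuinely different---and more conceptual---route than the paper. The paper proceeds by direct computation: it first records $LU=-2(I_2\ast U^2)U$ from the equation for $U$, and then computes $L(rU')$ explicitly in polar coordinates, using the Newton-type representation of Lemma~\ref{l:newton}, the formula \eqref{e:iad1} for $(I_2\ast U^2)'$, and several integrations by parts, arriving at $L(rU')=-2U+4(I_2\ast U^2)U$; adding the two identities gives the result. Your scaling argument replaces all of this by a single differentiation of the one-parameter family $u_\lambda=\lambda^2U(\lambda\cdot)$, the key input being the homogeneity of $I_2$. Your approach is shorter and, in addition, makes the origin of the function $2U+rU'$ transparent as the infinitesimal generator of the scaling; the paper's approach, on the other hand, stays entirely within the explicit Newton-formula framework already set up for later use and avoids any interchange-of-limits issue. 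The alternative you sketch at the end---computing $L(2U)$ and $L(rU')$ via the Euler identities $\Delta(x\cdot\nabla U)=x\cdot\nabla\Delta U+2\Delta U$ and $x\cdot\nabla(I_2\ast f)=I_2\ast(x\cdot\nabla f)+2(I_2\ast f)$---is essentially a coordinate-free repackaging of the paper's computation.
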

\begin{proof}
Firstly, by (\ref{e:ug}), we have
\begin{eqnarray}\label{e:lu}
LU=-\Delta U+U-(I_{2}\ast U^2)U-2(I_{2}\ast U^2)U=-2(I_{2}\ast U^2)U.
\end{eqnarray}

Secondly, we compute $L(rU')$. From (\ref{e:linearised}),
\begin{eqnarray}\label{e:lru'l}
L(rU')=-\Delta (rU')+rU'-(I_{2}\ast U^2)(rU')-2(I_{2}\ast (r U U'))U.
\end{eqnarray}
Using the polar coordinates representation of Laplacian, we have
\begin{equation}\label{e:lru'}
-\Delta (rU')=-r\left(U'''+\frac{n-1}{r}U''\right)-2U''-\frac{n-1}{r}U'.
\end{equation}
By Lemma \ref{l:newton},
\begin{eqnarray}\label{e:iup-1ru}
2(I_{2}\ast (rUU'))(x)&=&2\int_{\mathbb R^n}|x|I_{2}(x)(UU')(x)-2\int_0^r K(r,\rho)\rho U(\rho)U'(\rho)d\rho.
\end{eqnarray}
Integration by parts and (\ref{e:k}) yield that
\begin{eqnarray}\label{e:kinbyp}
2\int_0^r K(r,\rho)\rho U(\rho)U'(\rho)d\rho
=-\frac{1}{n-2}\int_0^r\rho(2-n\frac{\rho^{n-2}}{r^{n-2}})U^2(\rho)d\rho\notag
\end{eqnarray}
Then from (\ref{e:uradial-d1}) and (\ref{e:iad1}), we get
\begin{multline}\label{e:u'''}
-U'''-\frac{n-1}{r}U''=-\left(\frac{n-1}{r^2}+1\right)U'\\
-\frac{1}{r^{n-1}}\left(\int_{0}^r \rho^{n-1} U^2(\rho) d\rho\right)U+(I_{2}\ast U^2)U'.
\end{multline}
Putting (\ref{e:ug}), (\ref{e:lru'}) and (\ref{e:u'''}) together, we find that
\begin{eqnarray}\label{e:lru'pc}
-\Delta (rU')&=&-2U''-\left(\frac{2(n-1)}{r}+r\right)U'\notag\\
&&-\frac{1}{r^{n-2}}\left(\int_{0}^r \rho^{n-1} U^2(\rho) d\rho\right)U
+(I_{2}\ast U^2)rU'\notag\\
&=&-2U+2(I_{2}\ast U^2)U-rU'\notag\\
&&-\frac{1}{r^{n-2}}\left(\int_{0}^r \rho^{n-1} U^2(\rho) d\rho\right)U
+(I_{2}\ast U^2)rU'.
\end{eqnarray}
Moreover, integrating by parts, it holds that
\begin{eqnarray}\label{e:iuu'}
2\int_{\mathbb R^n}|x|I_{2}(x)(UU')(x)&=&\frac{1}{n-2}\int_{0}^{\infty}\rho^2U(\rho)U'(\rho)d\rho\\
&=&-\frac{2}{n-2}\int_{0}^{\infty}\rho U^2(\rho)d\rho=-2\int_{\mathbb R^n}I_{2}(x)U^2(x)dx.\notag
\end{eqnarray}
Then from (\ref{e:lru'l}), (\ref{e:lru'pc}), (\ref{e:iup-1ru}), (\ref{e:kinbyp}) and (\ref{e:iuu'}), it holds that
\begin{equation}\label{e:lru'1}
L(rU')=-2U+4(I_{2}\ast U^2)U.
\end{equation}
Hence (\ref{e:l2u+ru'}) follows from (\ref{e:lu}) and (\ref{e:lru'1}). This completes the proof.
\end{proof}

%\newpage

%\input{Expansion}
%\newpage

\section{Nondegeneracy}\label{s:nondegeneracy}

In this section, we prove Theorem \ref{t:nondegeneracy}.

To show the nondegeneracy for the case $n\ge 3$, we need to expand the nonlocal term in (\ref{e:ug}) by spherical harmonics.
Let $\mathcal Y_k={\rm span}\{Y_{km}\}_{m\in M_k}$ be the finite dimensional space of real-valued spherical harmonics of degree $k$. Here $M_k=\{1,2,\cdots,\dim \mathcal Y_k\}$, and for $m,m_1,m_2\in M_k$,
\begin{equation*}
-\Delta_{\mathbb S^{n-1}}Y_{km}=k(k+n-2)Y_{km},\quad \langle Y_{km_1},Y_{km_2}\rangle_{L^2(\mathbb S^{n-1})}=\left\{\begin{array}{cc}
                                                                                                                      1, & \mbox{ if }m_1=m_2, \\
                                                                                                                      0, & \mbox{ if }m_1\ne m_2,
                                                                                                                    \end{array}
\right.
\end{equation*}
where $\Delta_{\mathbb S^{n-1}}$ is the Laplacian on $\mathbb S^{n-1}$.
\begin{prop}\label{p:expansion}
For all $\varphi\in C^{\infty}_0(\mathbb R^n)$, we expanse
\begin{equation}\label{e:npc}
(I_2\ast\varphi)(x)=\sum_{k=0}^{\infty}\sum_{m\in M_k}\frac{1}{2k+n-2}\left(\int_{0}^{\infty}\frac{(r\wedge \rho)^{k}}{(r\vee \rho)^{k+n-2}}f_{km}(\rho)\rho^{n-1}d\rho\right)Y_{km}(\theta).
\end{equation}
Here $r\wedge \rho=\min\{r,\rho\}$, $r\vee \rho=\max\{r,\rho\}$, $r=|x|$, $\theta=x/|x|$ and $\varphi(x)=\varphi(r,\theta)=\sum_{k=0}^{\infty}\sum_{m\in M_k}f_{km}(r)Y_{km}(\theta)$ is the expansion of $\varphi$ by spherical harmonics.
\end{prop}
\begin{proof}
From \cite[Example 3.5.12]{simon2015harmonic}, we have the following multipole expansion
\begin{eqnarray}
\frac{1}{|x-y|^{n-2}}=(n-2)|\mathbb S^{n-1}|\sum_{k=0}^{\infty}\sum_{m\in M_k}\left(\frac{1}{2k+n-2}\right)\frac{(r\wedge \rho)^{k}}{(r\vee \rho)^{k+n-2}}Y_{km}(\theta)Y_{km}(\omega),
\end{eqnarray}
where $\theta=x/|x|$ and $\omega=y/|y|$. Then direct computations yield the Equation \eqref{e:npc}.
\end{proof}

Using spherical harmonics, we have
\begin{equation*}
L^2(\mathbb R^n)=\bigoplus_{k\ge 0}\mathcal L^2_k,
\end{equation*}
where
$\mathcal L^2_k=L^2(\mathbb R_+,r^{n-1}dr)\otimes\mathcal Y_k.$
For simplicity, set $\mathcal Y_{km}={\rm span}\{Y_{km}\}$ and $\mathcal L^2_{km}=L^2(\mathbb R_+,r^{n-1}dr)\otimes\mathcal Y_{km}$.
Any $\varphi\in L^2(\mathbb R^n)$ is of form
\begin{equation*}
\varphi(x)=\sum_{k=0}^{\infty}\sum_{m\in M_k} f_{km}(r)Y_{km}(\theta).
\end{equation*}
Here $x=(r,\theta)$ with $r=|x|$ and $\theta=x/|x|$. Moreover, set $\mathcal C_k=C^{\infty}_0(\mathbb R_+)\otimes\mathcal Y_k\subset \mathcal L^2_k$ and $\mathcal C_{km}=C^{\infty}_0(\mathbb R_+)\otimes\mathcal Y_{km}\subset\mathcal L^2_{km}$.

\begin{lemma}\label{l:linvariant}
For any function $\varphi_{km}(x)=f_{km}(r)Y_{km}(\theta)\in \mathcal C_{km}$, $k\in \mathbb N_+$, $m\in M_k$, it holds that $L(\varphi)\in \mathcal L^2_{km}$.
\end{lemma}
\begin{proof}
1. We recall that in polar coordinates,
\begin{equation}\label{e:lpcl}
-\Delta=-\partial^2_r-\frac{n-1}{r}\partial_r-\frac{1}{r^2}\Delta_{\mathbb S^{n-1}}.
\end{equation}
Then
\begin{eqnarray}\label{e:lpc1}
-\Delta (f_{km}(r)Y_{km}(\theta))= \left(-f_{km}''(r)-\frac{n-1}{r}f_{km}'(r)+\frac{k(k+n-2)}{r^2}f_{km}(r)\right)Y_{km}(\theta).
\end{eqnarray}
which belongs to $\mathcal L^2_{km}$. It is clearly that $(1-(I_2\ast U^2))f_{km}(r)Y_{km}(\theta)\in \mathcal L^2_{km}$.

2. We now focus on the nonlocal term. For simplicity, set $P(\varphi)=-U(I_2\ast (U\varphi))$.
From Proposition \ref{p:expansion}, we have that 
\begin{eqnarray}\label{e:expan2}
P(\varphi)=-\frac{U(r)}{2k+n-2}\left(\int_{0}^{\infty}\frac{(r\wedge \rho)^{k}}{(r\vee \rho)^{k+n-2}}U(\rho)f_{km}(\rho)\rho^{n-1}d\rho\right)Y_{km}(\theta).
\end{eqnarray}
Since $U$ is exponential decaying, it holds that $P(\varphi)\in \mathcal L^2_{km}$.
This complete the proof.
\end{proof}

Thanks to the proof of Lemma \ref{l:linvariant}, we can define $P_{km}$ (resp. $L_{km}$) as the restriction of $P$ (resp. $L$) on $\mathcal C_{km}$. Furthermore, we set
\begin{equation}\label{e:dvp}
\mbox{$J(\varphi):=-(I_2\ast U^2)(\varphi)$, and $P_k=\sum_{m\in M_k}P_{km}$.}
\end{equation}

For any nontrivial $f\in \mathcal L^2_{km}$, from \eqref{e:expan2}, it holds that $P_{km}f\in \mathcal L^2_{km}$.
By \cite[Theorem 9.8]{LieLosAnalysis} we obtain that
\begin{eqnarray}\label{e:l21}
\langle P_{km}f,f\rangle_{\mathcal L^2_{km}}
=\int_{\mathbb R^n}\int_{\mathbb R^n}\frac{U(x)f(x)U(y)f(y)}{|x-y|^{n-2}}dxdy
>0.
\end{eqnarray}
Since $U$ is bounded and exponentially decaying, the Hardy-Littlewood-Sobolev inequality (see, for example, \cite[Theorem 4.3]{LieLosAnalysis}) yields that for all $h\in L^2(\mathbb R^n)$,
\begin{multline}\label{e:l22}
|\langle P_{km}f,h\rangle_{L^2(\mathbb R^n)}|
\le\int_{\mathbb R^n}\int_{\mathbb R^n}\left|\frac{U(x)h(x)U(y)f(y)}{|x-y|^{n-2}}\right|dxdy
\le C_1\|Uh\|_{L^2(\mathbb R^n)}\|Uf\|_{L^{\frac{2n}{n+4}}(\mathbb R^n)}\\
\le C_2\|h\|_{L^2(\mathbb R^n)}\|f\|_{L^2(\mathbb R^n)}\|U\|_{L^{\frac{n}{2}}(\mathbb R^n)}\le C_3\|h\|_{L^2(\mathbb R^n)}\|f\|_{L^2(\mathbb R^n)}.
\end{multline}

We next investigate the restriction operator of $L$ on $L^2_{\rm rad}(\mathbb R^n)$.
\begin{prop}\label{t:nondeneracy-radial}
Let $L$ be the linearised operator given by (\ref{e:linearised}). Then we have
\begin{equation*}
{\rm ker}\, (L|_{L^2_{\rm rad}(\mathbb R^n)})=\{0\}.
\end{equation*}
\end{prop}
We split $L$ into two operators. To be more precise, we define
\begin{equation}\label{e:m2}
\Upsilon\varphi=-\Delta \varphi+\varphi-(I_{2}\ast U^2)\varphi+W\varphi,\quad \forall \varphi\in H^2(\mathbb R^n).
\end{equation}
where
\begin{equation*}
(W\varphi)(r)=2U(r)\int_0^r K(r,\rho)U(\rho)\varphi(\rho)d\rho,\quad \forall \varphi\in H^2(\mathbb R^n).
\end{equation*}
Then
\begin{equation*}
L\varphi=\Upsilon\varphi-\Xi\varphi,\quad \forall \varphi\in H^2(\mathbb R^n),
\end{equation*}
where
\begin{equation}\label{e:dfp}
\Xi\varphi=2\left(\int_{\mathbb R^n}\frac{U\varphi}{|x|^{n-2}}\right)U,\quad \forall \varphi\in H^2(\mathbb R^n).
\end{equation}

\begin{proof}[Proof of Proposition \ref{t:nondeneracy-radial}]
The proof is a modification of \cite[Proof of Proposition 2]{Lenzmann2009}. We outline the proof here for completeness.

1. We first prove the following result for operator $\Upsilon$.

\emph{Claim: If $v=v(r)$ is a radial solution to $\Upsilon v=0$ with $v(0)\ne 0$ and $v'(0)=0$, then $v(r)$ has no sign change and for $\varrho\in (0,1)$, there exist constant $C>0$ and $R>0$ such that
\begin{equation}\label{e:vdecay}
|v(r)|\ge C e^{\varrho r}, \quad \mbox{for } r\ge R.
\end{equation}}

In fact, without loss of generality, assume that $v(0)>U(0)>0$. In the polar coordinates, $v$ satisfies
\begin{equation}\label{e:vr}
v''(r)+\frac{n-1}{r}v'(r)=G(r)v(r)+(Wv)(r),
\end{equation}
where
\begin{equation*}
G(r)=1-(I_{2}\ast |U|^2)(r).
\end{equation*}
Note that $U$ satisfies
\begin{equation}\label{e:ur}
U''(r)+\frac{n-1}{r}U'(r)=G(r)U(r).
\end{equation}
Combining (\ref{e:vr}) with (\ref{e:ur}), we get that
\begin{equation*}
[r^{n-1}(Uv'-U'v)]'(r)=r^{n-1}U(r)(Wv)(r).
\end{equation*}
It follows that
\begin{equation}\label{e:v/u}
r^{n-1}\left(\frac{v}{U}\right)'(r)=\frac{1}{U^2(r)}\int_{0}^r\rho^{n-1}U(\rho)(Wv)(\rho)d\rho.
\end{equation}
By the assumption $v(0)>U(0)$, it holds that $v(r)>U(r)$ for $r>0$ sufficiently small. Assume that $r_0>0$ is the smallest number such that $v(r_0)=U(r_0)$. Then $\left(v/U\right)'(r_0)\le 0$. However, the right side of (\ref{e:v/u}) at $r=r_0$ is strictly positive since $v(r)>U(r)>0$ in $[0,r_0)$ and $K(r,\rho)>0$ for $0<\rho<r$. This is impossible. Hence we have
\begin{equation}\label{e:vgeu}
v(r)>U(r)\quad \mbox{for all } r\in(0,\infty).
\end{equation}
As a consequence, $v(r)$ is strictly positive.
From (\ref{e:vgeu}) and (\ref{e:v/u}), we obtain that
\begin{eqnarray}\label{e:v/udecay}
r^{n-1}\left(\frac{v}{U}\right)'(r)\ge 2U^{-2}(r)\int_0^r\rho^{n-1}U^{2}(\rho)\int_{0}^{\rho} K(\rho,t)U^2(t)dtd\rho.
\end{eqnarray}
Since $U$ is positive, it follows that for sufficiently large $r>0$, there exists a constant $C_1$
\begin{equation}\label{e:kint}
0<C_1<\int_0^r\rho^{n-1}u^{p}(\rho)\int_{0}^{\rho} K(\rho,t)u^p(t)dtd\rho<\infty.
\end{equation}
Let $\epsilon>0$ be a number sufficiently small. From (\ref{e:Uasymptotics}), we find that when $r$ is sufficiently large, there exists a constant  $c>0$ such that
\begin{equation}\label{e:udecay}
U(r)<ce^{-(1-\epsilon) r}.
\end{equation}
Thus from (\ref{e:v/udecay}), (\ref{e:udecay}) and (\ref{e:kint}), there exists $R_1>0$
\begin{equation*}
r^{n-1}\left(\frac{v}{U}\right)'(r)\ge C_2 e^{2(1-\epsilon) r},\quad \mbox{ for all }r>R_1.
\end{equation*}
Therefore, integrating both side of the inequality above, we obtain (\ref{e:vdecay}). Hence the claim holds.

2. Based on Step 1, we show the proposition by contradiction. Assume $\varphi\in L^{2}_{\rm rad}(\mathbb R^n)$ is a nontrivial solution to $L\varphi=0$. This means
\begin{equation*}
\Upsilon\varphi=\Xi\varphi:=2\sigma U,
\end{equation*}
where $\sigma=\int_{\mathbb R^n}\frac{U\varphi}{|x|^{n-2}}$.
Consider
\begin{equation}\label{e:m-nonhomogeneous}
\Upsilon w=\Xi\varphi.
\end{equation}
Then
\begin{equation*}
\varphi=v+w,
\end{equation*}
where $w$ is any solution to (\ref{e:m-nonhomogeneous}) and $v$ is a solution to $\Upsilon v=0$.
We construct a particular $w$.
Let $\eta=2U+rU'$. Clearly, $\eta$ belongs to $L^2_{\rm rad}(\mathbb R^n)$. By (\ref{e:dfp}) and Lemma \ref{l:l2u+ru'}, we have
\begin{equation*}
\Upsilon\eta=\Xi\eta-2U=2(\tau-1)U,
\end{equation*}
where $\tau=\int_{\mathbb R^n}\frac{U\eta}{|x|^{n-2}}$. Since $\eta$ is radial symmetric, it holds that $v\ne \eta$. Hence, by the result in Step 1, we obtain that $\tau\ne 1$. It follows that
\begin{equation*}
w=\frac{\sigma}{\tau-1}\eta\in L^2_{\rm rad}(\mathbb R^n)
\end{equation*}
is a solution to (\ref{e:m-nonhomogeneous}).

If $v\equiv0$, then $\varphi=w$. Note that $\Xi\varphi\ne 0$. Thus $Lw=-(2\sigma/(\tau-1))U\ne 0$. It contradicts that $L\varphi=0$. Therefore $v\ne 0$. Since $v$ is radially symmetric and of $C^2$ class in $\mathbb R^n$, we have $v'(0)=0$. Assume that $v(0)\ne 0$. Then Step 1 yields that $v\notin L^2_{\rm rad}(\mathbb R^n)$. This contradicts $v=\varphi-w\in L^2_{\rm rad}(\mathbb R^n)$. Hence $v(0)=0$. By uniqueness of radial solutions to $\Upsilon v=0$, it holds that $v\equiv 0$. It is a contradiction. This completes the proof.
\end{proof}

With the preparations above, we are now in place to compute the kernel of $L$. This procedure is similar with that in the three dimension case (\cite{Lenzmann2009}). We shall only give a sketch of the proof.

\begin{proof}[Proof of Theorem \ref{t:nondegeneracy}]
We shall prove the theorem by 4 steps as follows.

1. The restriction of $-\Delta$ on $\mathcal C_k$ is given by
\begin{equation*}
-\Delta_k=-\partial^2_r-\frac{n-1}{r}\partial_r+\frac{k(k+n-2)}{r^2}.
\end{equation*}
From \cite[Theorem X. 10 and Example 4]{ReedSimonII}, we obtain that for $k\ge 1$, $-\Delta_k$ is essentially self-adjoint on $C_0^{\infty}(\mathbb R_+)\subset L^2(\mathbb R_+,r^{n-1}dr)$. Moreover, \eqref{e:l21} and \eqref{e:l22} tells us that $J$ and $P_k$ is bounded and self-adjoint in $\mathcal L^2_k$. Then Kato-Rellich theorem yields that $L_k=-\Delta_k+J+P_k$ is essentially self-adjoint on $C_0^{\infty}(\mathbb R_+)\subset L^2(\mathbb R_+,r^{n-1}dr)$ (with an abuse of the notations, $J,P_k$ denote the corresponding operators on $L^2(\mathbb R_+,r^{n-1}dr)$ for simplicity).

2. For $\lambda>0$ sufficiently large, it holds that
\begin{equation*}
(L_k+\lambda)^{-1}=(-\Delta_k+\lambda+J+P_k)^{-1}=(-\Delta_k+\lambda)^{-1}(I+(J+P_k)(-\Delta_k+\lambda)^{-1})^{-1}.
\end{equation*}
We recall that for $\lambda>0$, the resolvent $(-\Delta_k+\lambda)^{-1}$ is positivity improving on $L^2(\mathbb R_+,r^{n-1}dr)$ (\cite[Equation (7.15)]{Lenzmann2009}).
For simplicity, we set $A_k=J+P_k$.
By the definition of $J$ and $P_k$ (see (\ref{e:dvp})), we have that $-A_k$ is positivity improving on $L^2(\mathbb R_+,r^{n-1}dr)$.
Recalling that $A_k$ is bounded, we have that for $\lambda$ sufficiently large,
\begin{equation*}
\|A_k(-\Delta_k+\lambda)^{-1}\|_{L^2(\mathbb R_+,r^{n-1}dr)} <1.
\end{equation*}
By Neumann expansion, we get
\begin{equation}\label{e:llrne}
(L_k+\lambda)^{-1}=(-\Delta_k+\lambda)^{-1}\sum_{j=1}^{\infty}(-A_k(-\Delta_k+\lambda)^{-1})^j.
\end{equation}
It follows that $(L_k+\lambda)^{-1}$ is positivity improving as well.

3. Let $\lambda_{k,0}=\inf \sigma(L_k)$ be the lowest eigenvalue. Note that $(L_k+\lambda)^{-1}$ is bounded and self-adjoint, and its largest eigenvalue $\mu_0=\sup\sigma((L_k+\lambda)^{-1})=(\lambda_{k,0}+\lambda)^{-1}$. Moreover, the corresponding eigenspaces of $L_k$ and $(L_k+\lambda)^{-1}$ coincide. From \cite[Theorem XIII. 43]{ReedSimonIV}, it holds that $\lambda_0$ is simple and the corresponding eigenfunction is strictly positive on $\mathbb R_+$ (Perron-Frobenius property) since $(L_k+\lambda)^{-1}$ is positivity improving. Therefore, $L_{k}$ has Perron-Frobenius property as well.

4. We finally compute ${\rm ker}L$. It is clear that $\partial_{x_1} U,\partial_{x_2} U,\cdots,\partial_{x_n} U$ belong to ${\rm ker}\,L$. Note that
\begin{equation*}
\partial_{x_j}U(r)=U'(r)\frac{x_j}{r}.
\end{equation*}
Recalling that $x_j/r$ belongs to $\mathcal Y_1$, we have that
\begin{equation*}
L_1 U'=0.
\end{equation*}
Note that $U'\le 0$. Then by the result of Step 3, there exists a constant $c<0$ such that
\begin{equation*}
U'(r)=c\phi_{0,1}(r),
\end{equation*}
where $\phi_{0,1}$ is the strictly positive ground state of $L_1$ which corresponds the lowest eigenvalue $\lambda_{1,0}=0$. Hence ${\rm ker}L\cap \mathcal L^2_1={\rm span}\{\partial_{x_1}U, \partial_{x_2}U,\cdots,\partial_{x_n}U\}$.

 We now prove that
$L_k>0, \mbox{ for }k\ge 2$.
In particular, ${\rm ker}L\cap \mathcal L^2_k=0$.

Indeed, for any fixed $k\ge 2$, Step 3 and \cite[Remarks of Lemma 7]{Lenzmann2009} yield that $\lambda_{k,0}<1$ is simple and its corresponding eigenfunction $\phi_{k,0}$ is strictly positive. Therefore, by Proposition \ref{p:expansion},
\begin{equation*}
\lambda_{k,0}=\langle \phi_{k,0},L_k\phi_{k,0}\rangle=\langle \phi_{k,0},L_1\phi_{k,0}\rangle+W_k,
\end{equation*}
where
\begin{eqnarray*}
&&W_k:=\langle \phi_{k,0},(L_k-L_1)\phi_{k,0}\rangle\\
&=&\int_{0}^{\infty}\frac{(k(k+n-2)-(n-1))}{r^2}\phi_{k,0}^2r^{n-1}dr+\notag\\
&&2\int_0^{\infty}\int_0^{\infty}U(r)\phi_{k,0}(r)
\left(\frac{1}{n}\frac{r\wedge \rho}{(r\vee \rho)^{n-2}}-\frac{1}{2k+n-2}\frac{(r\wedge \rho)^k}{(r\vee \rho)^{k+n-2}}\right)\\
&&\quad\quad\quad\quad \quad\quad\quad\quad\quad\quad\quad\quad\quad\quad\quad\quad\quad\quad\quad\quad\quad U(\rho)\phi_{k,0}(\rho)r^{n-1}\rho^{n-1}d\rho dr.\notag
\end{eqnarray*}
Here $r\wedge \rho=\min\{r,\rho\}$ and $r\vee \rho=\max\{r,\rho\}$. Note that $\frac{1}{n}\frac{(r\wedge \rho)}{(r\vee \rho)^{n-2}}-\frac{1}{2k+n-2}\frac{(r\wedge \rho)^k}{(r\vee \rho)^{k+n-2}}>0$ for $k\ge 2$. Recalling that $U$ and $\phi_{k,0}$ are strictly positive, we obtain that
\begin{equation*}
W_k>0,\quad\mbox{ for all }k\ge 2.
\end{equation*}
Since $\langle \phi_{k,0},L_1\phi_{k,0}\rangle \ge \lambda_{1,0}=0$, we finally have
\begin{equation*}
\lambda_{k,0}\ge W_k>0, \quad\mbox{ for all }k\ge 2.
\end{equation*}
This completes our proof.
\end{proof}

%\newpage

\section{Proof of Theorem \ref{t:multiple}}\label{s:application}
In this section, we construct multiple semi-classical solutions to (\ref{e:namain}) as an application of the nondegeneracy of ground states.

\subsection{Functional setting}
By a change of variables $x\to \varepsilon x$, Equation (\ref{e:namain}) becomes
\begin{equation} \label{e:namain1}
-\Delta v+v+V(\varepsilon x)v=(I_{2}\ast v^2)v\quad \mbox{ in }\mathbb R^n.
\end{equation}
Solutions of (\ref{e:namain1}) correspond to the critical points of
\begin{eqnarray*}
f_{\varepsilon}(v)&=&\frac{1}{2}\|v\|^2-\frac{1}{4}\int_{\mathbb R^n}(I_2\ast v^2) v^2dx+\frac{1}{2}\int_{\mathbb R^n}V(\varepsilon x)v^2dx\\
&:=& f_0(v)+\frac{1}{2}\int_{\mathbb R^n}V(\varepsilon x)v^2dx,\quad\quad v\in H^1(\mathbb R^n).\notag
\end{eqnarray*}
For $n=3,4,5$, we have that $f_{\varepsilon}\in C^2(H^1(\mathbb R^n))$. We shall verify that there exist solutions of (\ref{e:namain1}) near a solution of
\begin{equation} \label{e:namain2}
-\Delta v+v+V(\varepsilon \xi)v=(I_{2}\ast v^2)v\quad \mbox{ in }\mathbb R^n,
\end{equation}
with some appropriate $\xi$ in $\mathbb R^n$. The solutions of (\ref{e:namain2}) are critical points of
\begin{equation*}
F_{\varepsilon,\xi}(v)=f_0(v)+\frac{1}{2}V(\varepsilon \xi)\int_{\mathbb R^n}v^2dx,\quad\quad v\in H^1(\mathbb R^n).
\end{equation*}
Then we have
\begin{equation}\label{e:fvFvx}
f_{\varepsilon}(v)=F_{\varepsilon,\xi}(v)+\frac{1}{2}\int_{\mathbb R^n}(V(\varepsilon x)-V(\varepsilon\xi))v^2dx.
\end{equation}

Let $\beta=\beta(\varepsilon \xi)=(1+V(\varepsilon \xi))^{\frac{1}{2}}$ and $\alpha=\alpha(\varepsilon \xi)=(1+V(\varepsilon \xi))=(\beta(\varepsilon\xi))^2$. Then $\alpha U(\beta x)$ is a solution to (\ref{e:namain2}). Let
\begin{equation}\label{e:dfzx}
z_{\varepsilon,\xi}:=\alpha(\varepsilon\xi) U(\beta(\varepsilon\xi) x)
\end{equation}
and $Z_{\varepsilon}:=\{z_{\varepsilon,\xi}(x-\xi)\,|\,\xi\in\mathbb R^n\}.$
For simplicity, we set $z_{\xi}=z_{\varepsilon,\xi}(x-\xi)$.

\begin{lemma}\label{l:pxpx}
For all $\xi\in\mathbb R^n$,
\begin{equation}\label{e:pzv}
\partial_{\xi_i}[z_{\varepsilon,\xi}(x-\xi)]=-\partial_{x_i}[z_{\varepsilon,\xi}(x-\xi)]+O(\varepsilon).
\end{equation}
\end{lemma}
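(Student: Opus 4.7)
The plan is to differentiate $z_\xi(x)=z_{\varepsilon,\xi}(x-\xi)=\alpha(\varepsilon\xi)U(\beta(\varepsilon\xi)(x-\xi))$ in $\xi_i$ by the chain rule, isolating the translation piece (which reproduces $-\partial_{x_i} z_\xi$) from the pieces coming from the $\xi$-dependence of the scaling factors $\alpha(\varepsilon\xi),\beta(\varepsilon\xi)$ (which carry an explicit factor of $\varepsilon$).

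Setting $y=x-\xi$ and applying the chain rule to both the prefactor $\alpha(\varepsilon\xi)$ and the argument $\beta(\varepsilon\xi)(x-\xi)$ of $U$, one obtains
$$\partial_{\xi_i} z_\xi(x) = \varepsilon(\partial_i\alpha)(\varepsilon\xi)\,U(\beta y) + \varepsilon\,\alpha(\varepsilon\xi)(\partial_i\beta)(\varepsilon\xi)\,y\cdot\nabla U(\beta y) - \alpha(\varepsilon\xi)\beta(\varepsilon\xi)(\partial_i U)(\beta y).$$
Since $\partial_{x_i} z_\xi(x)=\alpha(\varepsilon\xi)\beta(\varepsilon\xi)(\partial_i U)(\beta(x-\xi))$, the last term is exactly $-\partial_{x_i} z_\xi(x)$, and (\ref{e:pzv}) reduces to showing that the two $\varepsilon$-prefixed terms are $O(\varepsilon)$ in the norm relevant for the Lyapunov-Schmidt reduction (in practice $H^1(\mathbb R^n)$).

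For the uniform-in-$\xi$ bound I would invoke condition ${\bf (V)}$: the assumption $V\in C^3_b(\mathbb R^n)$ with $\inf_{\mathbb R^n}(1+V)>0$ makes $\alpha(\varepsilon\xi),\beta(\varepsilon\xi)$ and their derivatives $\partial_i\alpha,\partial_i\beta$ bounded uniformly in $\xi$, while $\beta$ stays bounded away from zero. Together with the exponential decay of $U$ and of $\nabla U$ (Lemma \ref{l:decayu'}), the rescaling $z=\beta y$ then yields that both $\|U(\beta\,\cdot)\|_{H^1}$ and $\|y\cdot\nabla U(\beta y)\|_{H^1(dy)}$ are bounded by a constant independent of $\xi\in\mathbb R^n$. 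Consequently each of the two remainder terms has $H^1$-norm at most $C\varepsilon$ uniformly in $\xi$, which is (\ref{e:pzv}).

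The computation is a routine application of the chain rule; the only point that requires a little care is verifying that the $O(\varepsilon)$ estimate is uniform in $\xi$, which is precisely what the global hypotheses in ${\bf (V)}$ guarantee. I do not anticipate any genuine obstacle beyond bookkeeping of the scaling factors.
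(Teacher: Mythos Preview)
Your argument is correct and follows the same route as the paper: a direct chain-rule computation producing the translation term $-\partial_{x_i}z_\xi$ plus two $\varepsilon$-prefixed remainders, which are then bounded using the definition of $\alpha,\beta$ together with the decay of $U$ and $U'$ from Lemma~\ref{l:decayu'}. If anything, you are slightly more explicit than the paper about the uniformity in $\xi$ and the $H^1$ sense of the $O(\varepsilon)$, which is indeed how the lemma is used downstream.
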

\begin{proof}
A direct calculation yields that
\begin{eqnarray*}
&&\partial_{\xi_i}[z_{\varepsilon,\xi}(x-\xi)]\\
&=&\partial_{\xi_i}[\alpha(\varepsilon\xi) U(\beta(\varepsilon\xi)( x-\xi)]\notag\\
&=&\varepsilon [\partial_{\xi_i}\alpha](\varepsilon\xi)U(\beta(\varepsilon\xi)
(x-\xi))+\varepsilon\alpha(\varepsilon\xi)[\partial_{\xi_i}\beta](\varepsilon\xi)[\nabla U](\beta(\varepsilon\xi)
(x-\xi))\cdot (x-\xi)\notag\\
&&-\alpha(\varepsilon\xi)\beta(\varepsilon\xi)[\partial_{x_i}U](\beta(\varepsilon\xi)(x-\xi)).\notag
\end{eqnarray*}
By the definition of $\alpha,\,\beta$ and the decay property of $U$ and $U'$ (Lemma \ref{l:decayu'}), we have estimate (\ref{e:pzv}).
\end{proof}
The Fr{\'e}chet derivative of $f_{\varepsilon}$ at $z_{\xi}$ is small. That is,
\begin{lemma}\label{l:dfvz}
For all $\xi\in \mathbb R^n$ and all $\varepsilon>0$ small,
\begin{equation*}
\|D f_{\varepsilon}(z_{\xi})\|\le C(\varepsilon|\nabla V(\varepsilon \xi)|+\varepsilon^2),
\end{equation*}
where $C$ is a constant independent on $\xi$ and $\varepsilon$.
\end{lemma}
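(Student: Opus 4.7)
The plan is to exploit the identity \eqref{e:fvFvx} together with the observation that $z_\xi$ is an exact critical point of $F_{\varepsilon,\xi}$, which reduces the estimation of $Df_\varepsilon(z_\xi)$ to the bilinear form associated with the ``error potential'' $V(\varepsilon x)-V(\varepsilon\xi)$.

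First I would verify that $z_\xi$ solves \eqref{e:namain2}. A direct rescaling using $\alpha=\beta^2=1+V(\varepsilon\xi)$ shows that $z_{\varepsilon,\xi}(x)=\alpha U(\beta x)$ satisfies $-\Delta z+z+V(\varepsilon\xi)z=(I_2\ast z^2)z$, and by translation invariance of this equation the same holds for $z_\xi(x)=z_{\varepsilon,\xi}(x-\xi)$. Consequently $DF_{\varepsilon,\xi}(z_\xi)=0$, and differentiating \eqref{e:fvFvx} at $z_\xi$ gives, for every $\varphi\in H^1(\mathbb R^n)$,
\begin{equation*}
\langle Df_\varepsilon(z_\xi),\varphi\rangle=\int_{\mathbb R^n}\bigl(V(\varepsilon x)-V(\varepsilon\xi)\bigr)\,z_\xi(x)\,\varphi(x)\,dx.
\end{equation*}

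Next I would Taylor-expand the potential. Condition $\bf (V)$ ensures $V\in C^3_b$, so
\begin{equation*}
V(\varepsilon x)-V(\varepsilon\xi)=\varepsilon\,\nabla V(\varepsilon\xi)\cdot(x-\xi)+R(\varepsilon,x,\xi),\qquad |R(\varepsilon,x,\xi)|\le C\varepsilon^2|x-\xi|^2,
\end{equation*}
with $C$ depending only on $\|D^2 V\|_\infty$. Inserting this into the previous identity splits the pairing into a linear term and a quadratic remainder, which I would bound by Cauchy--Schwarz as
\begin{equation*}
\bigl|\langle Df_\varepsilon(z_\xi),\varphi\rangle\bigr|\le \varepsilon|\nabla V(\varepsilon\xi)|\,\bigl\|(x-\xi)z_\xi\bigr\|_{L^2}\,\|\varphi\|_{L^2}+C\varepsilon^2\,\bigl\||x-\xi|^2 z_\xi\bigr\|_{L^2}\,\|\varphi\|_{L^2}.
\end{equation*}

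Finally I would check that the weighted norms of $z_\xi$ are bounded uniformly in $\varepsilon$ and $\xi$. Using the change of variables $y=\beta(x-\xi)$, one has
\begin{equation*}
\bigl\||x-\xi|^j z_\xi\bigr\|_{L^2(\mathbb R^n)}^2=\alpha^2\beta^{-n-2j}\int_{\mathbb R^n}|y|^{2j}U^2(y)\,dy,\qquad j=1,2,
\end{equation*}
and because $\bf (V)$ yields $0<c\le \alpha(\varepsilon\xi),\beta(\varepsilon\xi)\le C$ uniformly, while $U$ and all its polynomial weights lie in $L^2(\mathbb R^n)$ by the exponential decay in \eqref{e:Uasymptotics}, both norms are controlled by a constant independent of $\varepsilon$ and $\xi$. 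Combining these ingredients and using $\|\varphi\|_{L^2}\le\|\varphi\|$ delivers the asserted bound $\|Df_\varepsilon(z_\xi)\|\le C(\varepsilon|\nabla V(\varepsilon\xi)|+\varepsilon^2)$.

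The only delicate point is the uniform-in-$\xi$ control of the constants: one must use condition $\bf (V)$ to keep $\alpha,\beta$ and the bounds on $D^2V$ away from degenerating, which I expect to be routine but is the step where the hypothesis $\bf (V)$ is indispensable.
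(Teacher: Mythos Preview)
Your proposal is correct and follows essentially the same approach as the paper: use \eqref{e:fvFvx} and $DF_{\varepsilon,\xi}(z_\xi)=0$ to reduce to the pairing with $V(\varepsilon x)-V(\varepsilon\xi)$, then Taylor-expand and control the weighted moments of $z_\xi$ via the exponential decay of $U$. The only cosmetic difference is ordering---the paper first applies Cauchy--Schwarz to isolate $\bigl(\int|V(\varepsilon x)-V(\varepsilon\xi)|^2 z_\xi^2\bigr)^{1/2}$ and then Taylor-expands inside that integral, whereas you expand first and bound each piece separately; both routes lead to the same moment integrals $\int|x-\xi|^{2j}z_\xi^2$ for $j=1,2$.
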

\begin{proof}
Since $z_{\xi}$ is a critical point of $F_{\varepsilon,\xi}$, using (\ref{e:fvFvx}) we obtain that
\begin{eqnarray*}
Df_{\varepsilon}(z_{\xi})(w)=\int_{\mathbb R^n}(V(\varepsilon x)-V(\varepsilon\xi))z_{\xi}wdx,\quad \forall w\in H^1(\mathbb R^n).
\end{eqnarray*}
Then the H\"older inequality yields that
\begin{equation*}
|Df_{\varepsilon}(z_{\xi})(w)|^2\le \left(\int_{\mathbb R^n }|V(\varepsilon x)-V(\varepsilon\xi)|^2z_{\xi}^2dx\right)\|w\|^2_{L^2}.
\end{equation*}
It follows that
\begin{equation}\label{e:dfne}
\|Df_{\varepsilon}(z_{\xi})\|\le \left(\int_{\mathbb R^n }|V(\varepsilon x)-V(\varepsilon\xi)|^2z_{\xi}^2dx\right)^{\frac{1}{2}}.
\end{equation}
Therefore, since $D^2 V$ is bounded and $U$ decays exponentially, we have
\begin{eqnarray}\label{e:nite}
&&\int_{\mathbb R^n }|V(\varepsilon x)-V(\varepsilon\xi)|^2z_{\xi}^2dx\\
&\le& C\left(\varepsilon^2|\nabla V(\varepsilon\xi)|^2\int_{\mathbb R^n}|x-\xi|^2z_{\xi}^2dx+\varepsilon^4\int_{\mathbb R^n}|x-\xi|^4z_{\xi}^2dx\right)\notag\\
&\le &C(\varepsilon^2|\nabla V(\varepsilon\xi)|^2+\varepsilon^4).\notag
\end{eqnarray}
Hence by (\ref{e:dfne}) and (\ref{e:nite}), we get the conclusion. This completes the proof.
\end{proof}

\begin{lemma}\label{l:hls}
Assume that $n=3,4,5$. Then it holds that for all $\varphi_j\in H^1(\mathbb R^n)$ ($j=1,2,3,4$),
\begin{equation}\label{e:hls}
\left|\int_{\mathbb R^n}\int_{\mathbb R^n}\frac{\varphi_1(x)\varphi_2(x)\varphi_3(y)\varphi_4(y)}{|x-y|^{n-2}}dxdy\right|\le C\|\varphi_1\|\|\varphi_2\|\|\varphi_3\|\|\varphi_4\|,
\end{equation}
where $C$ is a constant independent on $\varphi_j$ ($j=1,2,3,4$).
\end{lemma}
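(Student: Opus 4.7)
The plan is to combine the Hardy-Littlewood-Sobolev inequality with H\"older's inequality and the Sobolev embedding theorem; the dimensional restriction $n\in\{3,4,5\}$ (in fact, up to $n\le 6$) will emerge naturally from the embedding step.

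First, I would group the four factors into two products, writing $f:=\varphi_1\varphi_2$ and $g:=\varphi_3\varphi_4$, so that the quadrilinear form becomes the bilinear pairing
\[
\int_{\mathbb R^n}\!\!\int_{\mathbb R^n}\frac{f(x)\,g(y)}{|x-y|^{n-2}}\,dx\,dy.
\]
The Hardy-Littlewood-Sobolev inequality (\cite[Theorem 4.3]{LieLosAnalysis}) with singularity exponent $\lambda=n-2$ gives $|\iint f(x)g(y)/|x-y|^{n-2}|\le C\|f\|_{L^p}\|g\|_{L^q}$ provided $\tfrac{1}{p}+\tfrac{1}{q}+\tfrac{n-2}{n}=2$. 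Taking $p=q$ forces $p=q=\tfrac{2n}{n+2}$.

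Second, I apply H\"older's inequality with the split $\tfrac{n+2}{2n}=\tfrac{n+2}{4n}+\tfrac{n+2}{4n}$ to factor each product, obtaining
\[
\|\varphi_i\varphi_j\|_{L^{2n/(n+2)}(\mathbb R^n)}\le \|\varphi_i\|_{L^{4n/(n+2)}(\mathbb R^n)}\|\varphi_j\|_{L^{4n/(n+2)}(\mathbb R^n)}.
\]
Finally, I would invoke the Sobolev embedding $H^1(\mathbb R^n)\hookrightarrow L^s(\mathbb R^n)$, which is valid for $s\in [2,\tfrac{2n}{n-2}]$. It remains to check that $s_0:=\tfrac{4n}{n+2}$ lies in this range. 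The inequality $s_0\ge 2$ holds for every $n\ge 2$, while $s_0\le \tfrac{2n}{n-2}$ rearranges to $2(n-2)\le n+2$, i.e.\ $n\le 6$. Both conditions are satisfied for $n=3,4,5$, giving $\|\varphi_j\|_{L^{4n/(n+2)}}\le C\|\varphi_j\|$ for $j=1,2,3,4$. Multiplying the Hardy-Littlewood-Sobolev, H\"older, and Sobolev estimates yields (\ref{e:hls}).

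There is no genuine obstacle in this lemma; the only content is the exponent bookkeeping, and the borderline case is the Sobolev upper bound $s_0\le \tfrac{2n}{n-2}$, which is the step that singles out low dimensions. Notice that this computation actually allows $n\le 6$, so the restriction $n=3,4,5$ invoked in the statement is inherited from the well-posedness of the ambient functional in (\ref{e:functional}) rather than from the quadrilinear estimate itself.
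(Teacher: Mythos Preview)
Your argument is correct and follows exactly the same route as the paper: Hardy--Littlewood--Sobolev with exponents $p=q=\tfrac{2n}{n+2}$, then H\"older to reach $L^{4n/(n+2)}$, then the Sobolev embedding $H^1\hookrightarrow L^{4n/(n+2)}$. Your additional remark that the embedding step in fact allows $n\le 6$ (the paper simply invokes $n\le 5$ without computing the sharp threshold) is a valid observation.
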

\begin{proof}
By the Hardy-Littlewood-Sobolev inequality and H\"older inequality, we have that
\begin{eqnarray*}
\left|\int_{\mathbb R^n}\int_{\mathbb R^n}\frac{\varphi_1(x)\varphi_2(x)\varphi_3(y)\varphi_4(y)}{|x-y|^{n-2}}dxdy\right|&\le & \|\varphi_1\varphi_2\|_{L^{\frac{2n}{n+2}}}\|\varphi_3\varphi_4\|_{L^{\frac{2n}{n+2}}}\\
&\le& \|\varphi_1\|_{L^{\frac{4n}{n+2}}} \|\varphi_2\|_{L^{\frac{4n}{n+2}}}\|\varphi_3\|_{L^{\frac{4n}{n+2}}}\|\varphi_4\|_{L^{\frac{4n}{n+2}}}.\notag
\end{eqnarray*}
Then since $n\le 5$, the Sobolev imbedding yields that
\begin{eqnarray*}
\|\varphi_j\|_{L^{\frac{4n}{n+2}}}\le C\|\varphi_j\|,\quad j=1,2,3,4.
\end{eqnarray*}
Then (\ref{e:hls}) follows. This completes the proof.
\end{proof}

\subsection{Invertibility}
Let $D^2f_{\varepsilon}(z_{\xi})$ be the Hessian of $f_{\varepsilon}$ at $z_{\xi}$. Let $T_{z_{\xi}}(Z_{\varepsilon})$ be the tangent space of $Z_{\varepsilon}$ at $z_{\xi}$.
Define $L_{\varepsilon,\xi}:(T_{z_{\xi}}(Z_{\varepsilon}))^{\perp}\to H^1(\mathbb R^n)$ by
\begin{equation*}
\langle L_{\varepsilon,\xi}(v),w\rangle=D^2f_{\varepsilon}(z_{\xi})(v,w),\quad v\in(T_{z_{\xi}}(Z_{\varepsilon}))^{\perp} ,w\in H^1(\mathbb R^n).
\end{equation*}
Let $P_{\varepsilon,\xi}:H^1(\mathbb R^n)\to (T_{z_{\xi}}(Z_{\varepsilon}))^{\perp}$ be the orthogonal projection. We define
\begin{equation*}
\mathcal L_{\varepsilon,\xi}=P_{\varepsilon,\xi}L_{\varepsilon,\xi}:(T_{z_{\xi}}(Z_{\varepsilon}))^{\perp}\to(T_{z_{\xi}}(Z_{\varepsilon}))^{\perp}.
\end{equation*}
We now verify that $\mathcal L_{\varepsilon,\xi}$ is invertible on $(T_{z_{\xi}}(Z_{\varepsilon}))^{\perp}$.
\begin{prop}\label{p:invert}
For any fixed $\bar\delta>0$, there exists $\bar\varepsilon$ such that for all $0<\varepsilon<\bar\varepsilon$ and $|\xi|\le \bar\delta$
\begin{equation*}
|\langle L_{\varepsilon,\xi}w,w\rangle|\ge C\|w\|^2,\quad\forall w\in (T_{z_{\xi}}(Z_{\varepsilon}))^{\perp},
\end{equation*}
consequently,
\begin{equation*}
|\langle \mathcal L_{\varepsilon,\xi}w,w\rangle|\ge C\|w\|^2,\quad\forall w\in (T_{z_{\xi}}(Z_{\varepsilon}))^{\perp}.
\end{equation*}
Here $C$ is a constant depending only on $\bar\delta$ and $\bar\varepsilon$.
\end{prop}
\begin{proof}
1. Note that $T_{z_{\xi}}Z_{\varepsilon}={\rm span}\{\partial_{\xi_1}z_{\xi},\partial_{\xi_2}z_{\xi},\cdots,\partial_{\xi_n}z_{\xi}\}$. Thanks to Lemma \ref{l:pxpx}, we know that $\partial_{\xi_i}z_{\xi}$ is close to $-\partial_{x_i}z_{\xi}$ in $H^1(\mathbb R^n)$ as $\varepsilon\to 0$ and $|\xi|\le\bar\delta$. Let
\begin{equation*}
K_{\varepsilon,\xi}={\rm span}\{z_{\xi},\partial_{x_1}z_{\xi},\partial_{x_2}z_{\xi},\cdots,\partial_{x_n}z_{\xi}\}.
\end{equation*}
Hence it is sufficient to verify that there exists $\bar\varepsilon>0$ such that for all $0<\varepsilon<\bar\varepsilon$,
\begin{equation}\label{e:lzxzx}
\langle L_{\varepsilon,\xi}z_{\xi},z_{\xi}\rangle\le -C_1<0,
\end{equation}
and
\begin{equation}\label{e:phip}
\langle L_{\varepsilon,\xi}\phi,\phi\rangle\ge C_2\|\phi\|^2,\quad\forall \phi\in K_{\varepsilon,\xi}^{\perp},
\end{equation}
where $C_1,C_2$ are two constants depending only on $\bar\delta$ and $\bar\varepsilon$.

2. Recalling that $z_{\xi}$ is a mountain pass critical point of $F_{\varepsilon,\xi}$, we obtain that for any fixed $\varepsilon_1>0$ small, there exists a constant $c_1>0$ such that for all $0<\varepsilon\le \varepsilon_1$ and $|\xi|\le \bar\delta$,
\begin{equation*}
D^2F_{\varepsilon,\xi}(z_{\xi})[z_{\xi},z_{\xi}]<-c_1<0.
\end{equation*}
Therefore, from (\ref{e:nite}) we get
\begin{eqnarray*}
\langle L_{\varepsilon,\xi}z_{\xi},z_{\xi}\rangle&=&D^2F_{\varepsilon,\xi}(z_{\xi})[z_{\xi},z_{\xi}]+\int_{\mathbb R^n}(V(\varepsilon x)-V(\varepsilon\xi))z_{\xi}^2dx\notag\\
&\le&-c_1+c_2(\varepsilon |\nabla V(\varepsilon\xi)|+\varepsilon^2).
\end{eqnarray*}
Then (\ref{e:lzxzx}) follows.

3. We now prove (\ref{e:phip}). First, we have
\begin{equation*}
\langle L_{\varepsilon,\xi}\phi,\phi\rangle=D^2F_{\varepsilon,\xi}(z_{\xi})[\phi,\phi]+\int_{\mathbb R^n}(V(\varepsilon x)-V(\varepsilon\xi))\phi^2dx,\quad\forall \phi\in K_{\varepsilon,\xi}^{\perp}.
\end{equation*}
Since $z_{\xi}$ is a mountain pass critical point, it holds that
\begin{equation}\label{e:kperp}
D^2F_{\varepsilon,\xi}(z_{\xi})[\phi,\phi]\ge C_3\|\phi\|^2,\quad\forall \phi\in K_{\varepsilon,\xi}^{\perp}.
\end{equation}
Next we will borrow a cut-off function technique from \cite{AmMalSe2001}. For any $R>0$ sufficiently large, we choose a radial smooth cut-off function $\eta_1:\mathbb R^n\to\mathbb R$ such that
\begin{equation*}
\eta_1(x)=1\mbox{ for }|x|\le R,\,\eta_1(x)=0\mbox{ for } |x|\ge 2R, \mbox{ and } |\nabla \eta_1(x)|\le \frac{2}{R}\mbox{ for }R\le |x|\le 2R.
\end{equation*}
Let $\eta_2=1-\eta_1$. Let $\phi_i=\eta_i\phi$ ($i=1,2$). Then
\begin{eqnarray}\label{e:p123}
\|\phi\|^2&=&\|\phi_1\|^2+\|\phi_2\|^2+2\int_{\mathbb R^n}(\phi_1\phi_2+\nabla \phi_1\cdot\nabla\phi_2)dx\notag\\
&=&\|\phi_1\|^2+\|\phi_2\|^2+2\int_{\mathbb R^n}\eta_1\eta_2(\phi^2+|\nabla \phi|^2)dx+o_R(1)\|\phi\|^2.
\end{eqnarray}
By the definition of $\phi_i$,
\begin{equation}\label{e:st123}
\langle L_{\varepsilon,\xi}\phi,\phi\rangle
=\langle L_{\varepsilon,\xi}\phi_1,\phi_1\rangle+\langle L_{\varepsilon,\xi}\phi_2,\phi_2\rangle+2\langle L_{\varepsilon,\xi}\phi_1,\phi_2\rangle
:=T_1+T_2+T_3.
\end{equation}

4. First, we estimate $T_1$. Note that
\begin{equation*}
T_1=\langle L_{\varepsilon,\xi}\phi_1,\phi_1\rangle=D^2F_{\varepsilon,\xi}(z_{\xi})[\phi_1,\phi_1]+\int_{\mathbb R^n}(V(\varepsilon x)-V(\varepsilon\xi))\phi_1^2dx.
\end{equation*}
We split $\phi_1=\bar\phi_1+\psi$ where $\bar\phi_1\in K_{\varepsilon,\xi}^{\perp}$ and $\psi\in K_{\varepsilon,\xi}$.
Hence
\begin{equation*}
\psi=\langle \phi_1,z_{\xi}\rangle \|z_{\xi}\|^{-2}z_{\xi}+\sum_{i=1}^n\langle \phi_1,\partial_{x_i}z_{\xi}\rangle \|\partial_{x_i}z_{\xi}\|^{-2}\partial_{x_i}z_{\xi}.
\end{equation*}
It follows that
\begin{equation*}
D^2F_{\varepsilon,\xi}(z_{\xi})[\phi_1,\phi_1]
=D^2F_{\varepsilon,\xi}(z_{\xi})[\bar\phi_1,\bar\phi_1]
+D^2F_{\varepsilon,\xi}(z_{\xi})[\psi,\psi]+2D^2F_{\varepsilon,\xi}(z_{\xi})[\bar\phi_1,\psi].
\end{equation*}
(\ref{e:kperp}) implies that
\begin{equation}\label{e:kperp1}
D^2F_{\varepsilon,\xi}(z_{\xi})[\bar\phi_1,\bar\phi_1]\ge C_4\|\bar\phi_1\|^2.
\end{equation}
By $\phi\in K_{\varepsilon,\xi}^{\perp}$, it holds that
\begin{eqnarray*}
\langle \phi_1,z_{\xi}\rangle=-\langle\phi_2,z_{\xi}\rangle
=-\int_{\mathbb R^n}\eta_2(x)\phi(x) z_{\xi}(x)dx-\int_{\mathbb R^n}\nabla(\eta_2\phi)\cdot\nabla z_{\xi}dx.
\end{eqnarray*}
Since $\eta_2(x)=0$ for $|x|<R$ and $z_{\xi}(x)\to 0$ exponentially as $|x|\ge R\to\infty$ for $|\xi|\le \bar\delta$, we have that $\langle \phi_1,z_{\xi}\rangle=o_R(1)\|\phi\|$. A same estimate shows that $\langle \phi_1,\partial_{x_i}z_{\xi}\rangle=o_R(1)\|\phi\|$. Then
\begin{equation}\label{e:psiorphi}
\|\psi\|=o_R(1)\|\phi\|.
\end{equation}
Direct calculations yield that
\begin{eqnarray}\label{e:dfpsi}
D^2F_{\varepsilon,\xi}(z_{\xi})[\psi,\psi]&=&\|\psi\|^2+V(\varepsilon\xi)\int_{\mathbb R^n}\psi^2
-2\int_{\mathbb R^n}\int_{\mathbb R^n}\frac{z_{\xi}(x)\psi(x)z_{\xi}(y)\psi(y)}{|x-y|^{n-2}}dxdy\notag\\
&&\quad\quad-\int_{\mathbb R^n}\int_{\mathbb R^n}\frac{z_{\xi}^2(x)\psi^2(y)}{|x-y|^{n-2}}dxdy
\end{eqnarray}
Using Lemma \ref{l:hls}, we have
\begin{eqnarray}\label{e:dfpsi2}
\left|\int_{\mathbb R^n}\int_{\mathbb R^n}\frac{z_{\xi}(x)\psi(x)z_{\xi}(y)\psi(y)}{|x-y|^{n-2}}dxdy\right|
\le C\|\psi\|^2,
\end{eqnarray}
and
\begin{equation}\label{e:dfpsi3}
\left|\int_{\mathbb R^n}\int_{\mathbb R^n}\frac{z_{\xi}^2(x)\psi^2(y)}{|x-y|^{n-2}}dxdy\right|\le C\|\psi\|^2.
\end{equation}
Therefore, from (\ref{e:psiorphi}-\ref{e:dfpsi3}), we obtain
\begin{equation}\label{e:dfpsi4}
D^2F_{\varepsilon,\xi}(z_{\xi})[\psi,\psi]=o_R(1)\|\phi\|^2.
\end{equation}
Similarly, estimate
\begin{eqnarray}\label{e:dfpsiphi}
D^2F_{\varepsilon,\xi}(z_{\xi})[\bar\phi_1,\psi]&=&\langle\bar\phi_1,\psi\rangle+V(\varepsilon\xi)\int_{\mathbb R^n}\bar\phi_1\psi
-2\int_{\mathbb R^n}\int_{\mathbb R^n}\frac{z_{\xi}(x)\bar\phi_1(x)z_{\xi}(y)\psi(y)}{|x-y|^{n-2}}dxdy\notag\\
&&\quad\quad-\int_{\mathbb R^n}\int_{\mathbb R^n}\frac{z_{\xi}^2(x)\bar\phi_1(y)\psi(y)}{|x-y|^{n-2}}dxdy\notag\\
&=&o_{R}(1)\|\bar\phi_1\|\|\phi\|=o_R(1)\|\phi\|^2.
\end{eqnarray}
Combining (\ref{e:kperp1}), (\ref{e:dfpsi4}) and (\ref{e:dfpsiphi}), we get
\begin{equation}\label{e:dfpp}
D^2F_{\varepsilon,\xi}(z_{\xi})[\phi_1,\phi_1]\ge C\|\phi_1\|^2+o_R(1)\|\phi\|^2.
\end{equation}
On the other hand, for $|\xi|\le \bar\delta$,
\begin{eqnarray}\label{e:vvphi}
&&\int_{\mathbb R^n}|V(\varepsilon x)-V(\varepsilon\xi)|\phi_1^2dx\\
&\le&C_8\varepsilon\int_{\mathbb R^n}|x-\xi|\eta_1^2(x)\phi^2(x)dx\le C_9\varepsilon R\int_{\mathbb R^n}\eta_1^2(x)\phi^2(x)dx\le  C_9\varepsilon R\|\phi\|^2.\notag
\end{eqnarray}
Hence choosing $R=\varepsilon^{-\frac{1}{2}}$, (\ref{e:psiorphi}) and (\ref{e:dfpp}-\ref{e:vvphi}) yield that there exists $\bar\varepsilon$ small enough such that for all $\varepsilon\le \bar\varepsilon$,
\begin{equation}\label{e:t1}
T_1\ge C\|\phi_1\|^2+o_R(1)\|\phi\|^2.
\end{equation}

5. We now estimate $T_2$. A direct computation yields
\begin{eqnarray}\label{e:dfphi_2}
T_2=\langle L_{\varepsilon,\xi}\phi_2,\phi_2\rangle&=&\int_{\mathbb R^n}\left(|\nabla \phi_2|^2+(1+V(\varepsilon\xi))\phi_2^2\right)-\int_{\mathbb R^n}\int_{\mathbb R^n}\frac{z_{\xi}^2(x)\phi_2^2(y)}{|x-y|^{n-2}}dxdy.\notag\\
&&\quad-2\int_{\mathbb R^n}\int_{\mathbb R^n}\frac{z_{\xi}(x)\phi_2(x)z_{\xi}(y)\phi_2(y)}{|x-y|^{n-2}}dxdy.
\end{eqnarray}
Since $\inf_{x\in \mathbb R^n}(1+V(x))>0$, it follows that for $|\xi|\le \bar\delta$ and $\varepsilon$ small enough,
\begin{equation*}
\int_{\mathbb R^n}\left(|\nabla \phi_2|^2+(1+V(\varepsilon\xi))\phi_2^2\right)\ge C_9\|\phi_2\|^2.
\end{equation*}
Using Lemma \ref{l:hls} and $\eta_2=0$ in $B_R(0)$, we have
\begin{eqnarray*}
&&\left|\int_{\mathbb R^n}\int_{\mathbb R^n}\frac{z_{\xi}(x)\phi_2(x)z_{\xi}(y)\phi_2(y)}{|x-y|^{n-2}}dxdy\right|\\
&&\le C_5\|z_{\xi}\phi_2\|_{L^\frac{2n}{n+2}}^2\le C_5\|z_{\xi}\eta_2\|^{2}_{L^{n}}\|\phi\|^2\le o_R(1)\|\phi\|^2.\notag
\end{eqnarray*}
Furthermore, we estimate
\begin{eqnarray*}
&&\left|\int_{\mathbb R^n}\int_{\mathbb R^n}\frac{z_{\xi}^2(x)\phi_2^2(y)}{|x-y|^{n-2}}dxdy\right|
\le\int_{B_R^c}\int_{\mathbb R^n}\frac{z_{\xi}^2(x)}{|x-y|^{n-2}}\phi^2(y) dx dy\notag\\
&=&\int_{B_R^c}\int_{B_{R/2}}\frac{z_{\xi}^2(x)}{|x-y|^{n-2}}\phi^2(y) dx dy+\int_{B_R^c}\int_{B_{R/2}^c}\frac{z_{\xi}^2(x)}{|x-y|^{n-2}}\phi^2(y) dx dy\notag\\
&\le&C_{10}R^{-(n-2)}\|\phi\|^2+\int_{B_R^c}\phi^2(y)\left(\int_{B_{R/2}^c\cap B_1(y)}\frac{z_{\xi}^2(x)}{|x-y|^{n-2}}dx\right)dy\notag\\
&&\quad\quad+\int_{B_R^c}\phi^2(y)\left(\int_{B_{R/2}^c\cap B_1^c(y)}\frac{z_{\xi}^2(x)}{|x-y|^{n-2}}dx\right)dy\notag\\
&\le&C_{10}R^{-(n-2)}\|\phi\|^2+C_{11}e^{-R}\|\phi\|^2=o_R(1)\|\phi\|^2.
\end{eqnarray*}
Therefore, choosing $R$ large enough, we obtain that
\begin{equation}\label{e:t2}
T_2\ge C_{12}\|\phi_2\|^2+o_R(1)\|\phi\|^2.
\end{equation}

6. By a similar argument as in step 5, we get
\begin{eqnarray}\label{e:t3}
T_3\ge C_{14}\int_{\mathbb R^n}\eta_1\eta_2(|\nabla \phi|^2+\phi^2)dx+o_R(1)\|\phi\|^2.
\end{eqnarray}

7. Combining (\ref{e:t1}), (\ref{e:t2}), (\ref{e:t3}), (\ref{e:st123}) and (\ref{e:p123}), we finally prove (\ref{e:phip}). This completes the proof.
\end{proof}

\subsection{Lyapunov-Schmidt reduction}
In this subsection, we will show that there exists $w\in (T_{z_{\xi}}Z_{\varepsilon})^{\perp}$ such that
\begin{equation}\label{e:dfzw}
P_{\varepsilon,\xi}Df_{\varepsilon}(z_{\xi}+w)=0.
\end{equation}
Using expansion, we get
\begin{equation*}
Df_{\varepsilon}(z_{\xi}+w)=Df_{\varepsilon}(z_{\xi})+D^2f_{\varepsilon}(z_{\xi})[w]+\mathcal R(z_{\xi},w),
\end{equation*}
where
\begin{equation*}
\begin{array}{llcll}
                        \mathcal R(z_{\xi},w) & : & H^1(\mathbb R^n) & \to & \mathbb R \\
                          &   & \varphi & \to & \int_{\mathbb R^n}R(z_{\xi},w)\varphi dx.
                      \end{array}
\end{equation*}
Here $R(z_{\xi},w)$ is a high order nonlocal term given by
\begin{equation*}
R(z_{\xi},w)=-(I_2\ast w^2)z_{\xi}-2(I_2\ast(z_{\xi}w))w-(I_2\ast w^2)w.
\end{equation*}
Thus (\ref{e:dfzw}) becomes
\begin{equation}\label{e:dfexp}
\mathcal L_{\varepsilon,\xi} w+P_{\varepsilon,\xi}Df_{\varepsilon}(z_{\xi})+P_{\varepsilon}\mathcal R(z_{\xi},w)=0,\quad w\in (T_{z_{\xi}}(Z_{\varepsilon}))^{\perp}.
\end{equation}
Thanks to Proposition \ref{p:invert}, Equation (\ref{e:dfexp}) is equivalent to
\begin{equation}\label{e:defn}
w=-\mathcal L_{\varepsilon,\xi}^{-1}(P_{\varepsilon,\xi}Df_{\varepsilon}(z_{\xi})+P_{\varepsilon}\mathcal R(z_{\xi},w)):=N_{\varepsilon,\xi}(w).
\end{equation}
Next we prove that for proper $\varepsilon,\xi$, the map $N_{\varepsilon,\xi}$ is a contraction.
\begin{lemma}\label{l:rcontraction}
For all $w_1,w_2\in B_1\subset H^1(\mathbb R^n)$,
\begin{equation}\label{e:rw1w2}
\|\mathcal R(z_{\xi},w_2)-\mathcal R(z_{\xi},w_1)\|\le C(\|w_1\|+\|w_2\|)\|w_2-w_1\|,
\end{equation}
where $C$ is a constant independent on $w_1$ and $w_2$, $B_1$ is the unit ball in $H^1(\mathbb R^n)$.
\end{lemma}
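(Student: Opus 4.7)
The plan is to estimate the difference $\mathcal R(z_{\xi},w_2)-\mathcal R(z_{\xi},w_1)$ in the operator norm by testing against an arbitrary $\varphi\in H^1(\mathbb R^n)$ and reducing every piece to a four-factor convolution integral to which Lemma \ref{l:hls} directly applies. The key algebraic observation is that each of the three summands of $R(z_\xi,w)$ is multilinear in $w$, so each difference can be telescoped so as to factor out exactly one copy of $(w_2-w_1)$.

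Concretely, I would write
\begin{align*}
(I_2\ast w_2^2)z_\xi-(I_2\ast w_1^2)z_\xi
&=\bigl(I_2\ast[(w_1+w_2)(w_2-w_1)]\bigr)z_\xi,\\
(I_2\ast(z_\xi w_2))w_2-(I_2\ast(z_\xi w_1))w_1
&=\bigl(I_2\ast(z_\xi(w_2-w_1))\bigr)w_2+(I_2\ast(z_\xi w_1))(w_2-w_1),\\
(I_2\ast w_2^2)w_2-(I_2\ast w_1^2)w_1
&=\bigl(I_2\ast[(w_1+w_2)(w_2-w_1)]\bigr)w_2+(I_2\ast w_1^2)(w_2-w_1).
\end{align*}
Pairing the resulting difference $R(z_\xi,w_2)-R(z_\xi,w_1)$ with $\varphi$ produces a finite sum of double integrals of the form
\[
\int_{\mathbb R^n}\!\!\int_{\mathbb R^n}\frac{f_1(x)f_2(x)f_3(y)f_4(y)}{|x-y|^{n-2}}\,dx\,dy,
\]
where one factor is $\varphi$, another is $(w_2-w_1)$, and the remaining two are drawn from $\{z_\xi,\,w_1,\,w_2,\,w_1+w_2\}$.

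Applying Lemma \ref{l:hls} to each such integral gives a bound by the product $C\|f_1\|\|f_2\|\|f_3\|\|f_4\|$. Since $z_\xi=\alpha(\varepsilon\xi)U(\beta(\varepsilon\xi)(x-\xi))$ with $\alpha,\beta$ bounded above and below uniformly (thanks to condition $(V)$), one has $\|z_\xi\|\le C$ independently of $\xi,\varepsilon$. Combining this with $\|w_1\|,\|w_2\|\le 1$, every telescoped term is controlled by
\[
C(\|w_1\|+\|w_2\|)\|w_2-w_1\|\|\varphi\|.
\]
Taking the supremum over $\|\varphi\|=1$ yields \eqref{e:rw1w2}.

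The only place where one must be a touch careful is the term $(I_2\ast w_1^2)(w_2-w_1)\varphi$, which naively produces a prefactor $\|w_1\|^2$; here the restriction $w_1\in B_1$ is used to absorb one factor of $\|w_1\|$ so that the remaining factor fits inside $\|w_1\|+\|w_2\|$. Beyond this minor bookkeeping, the argument is a routine HLS estimate and there is no substantive obstacle: the nonlinear structure of $R$ has been tailor-made to leave three factors of the variables times one difference, which is exactly the shape demanded by Lemma \ref{l:hls}.
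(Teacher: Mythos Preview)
Your proof is correct and follows essentially the same approach as the paper's: test the difference against an arbitrary $\varphi$, telescope each of the three summands of $R(z_\xi,w)$ to factor out $(w_2-w_1)$, and apply Lemma~\ref{l:hls} to each resulting four-factor convolution integral. The paper merely compresses the telescoping and the final bound into a single display, whereas you spell out the algebra and the use of $\|w_1\|\le 1$ for the cubic term explicitly.
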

\begin{proof}
By Lemma \ref{l:hls}, we have that for all $\varphi\in H^1(\mathbb R^n)$,
\begin{eqnarray*}
&&|\mathcal R(z_{\xi},w_2)(\varphi)-\mathcal R(z_{\xi},w_1)(\varphi)|\\
&\le&\int_{\mathbb R^n}[I_2\ast|w_2^2-w_1^2|]|z_{\xi}\varphi| dx+2\int_{\mathbb R^n}|(I_2\ast(z_{\xi}w_2))w_2-(I_2\ast(z_{\xi}w_1))w_1||\varphi|dx\notag\\
&&\quad\quad\quad+\int_{\mathbb R^n}|(I_2\ast w_2^2)w_2-(I_2\ast w_1^2)w_1||\varphi|dx\notag\\
&:=&C(\|w_1\|+\|w_2\|)\|w_2-w_1\|\|\varphi\|.\notag
\end{eqnarray*}
This yields (\ref{e:rw1w2}).
\end{proof}
\begin{remark}\label{r:rsmall}
As a consequence, it holds that $\|\mathcal R(z_{\xi},w)\|=O(\|w\|^2)$.
\end{remark}

\begin{lemma}\label{l:ncontraction}
There exists a small ball $B_{\delta}\subset (T_{z_{\xi}}(Z_{\varepsilon}))^{\perp}$ such that $N_{\varepsilon,\xi}$ maps $B_{\delta}$ into itself for $0<\varepsilon\le \bar\varepsilon$ and $|\xi|\le \bar\delta$. Moreover, for all $w_1,w_2\in B_{\delta}$,
\begin{equation*}
\|N_{\varepsilon,\xi}(w_2)-N_{\varepsilon,\xi}(w_1)\|\le C(\|w_1\|+\|w_2\|)\|w_2-w_1\|,
\end{equation*}
where $C$ is a constant independent on $w_1$ and $w_2$. In particular, $N_{\varepsilon,\xi}$ is a contraction map on $B_{\delta}$.
\end{lemma}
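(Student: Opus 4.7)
The plan is to verify the hypotheses of the Banach fixed point theorem for the map $N_{\varepsilon,\xi}$ on a small closed ball of $(T_{z_{\xi}}(Z_{\varepsilon}))^{\perp}$. All the analytic inputs are already in place: Proposition \ref{p:invert} supplies a uniform bound $\|\mathcal L_{\varepsilon,\xi}^{-1}\|\le C$ for $0<\varepsilon\le\bar\varepsilon$ and $|\xi|\le\bar\delta$; Lemma \ref{l:dfvz} gives $\|Df_{\varepsilon}(z_{\xi})\|=O(\varepsilon)$ uniformly in $|\xi|\le\bar\delta$ (recall $\nabla V$ is bounded since $V\in C^3_b(\mathbb R^n)$); and Lemma \ref{l:rcontraction} together with Remark \ref{r:rsmall} controls the nonlinear remainder $\mathcal R(z_{\xi},w)$.

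First I would establish the Lipschitz estimate. Since the term $P_{\varepsilon,\xi}Df_{\varepsilon}(z_{\xi})$ does not depend on $w$, the definition (\ref{e:defn}) gives
\begin{equation*}
N_{\varepsilon,\xi}(w_2)-N_{\varepsilon,\xi}(w_1)=-\mathcal L_{\varepsilon,\xi}^{-1}P_{\varepsilon,\xi}\bigl(\mathcal R(z_{\xi},w_2)-\mathcal R(z_{\xi},w_1)\bigr).
\end{equation*}
Using the operator bound from Proposition \ref{p:invert}, the fact that the orthogonal projection $P_{\varepsilon,\xi}$ has norm at most one, and Lemma \ref{l:rcontraction}, I obtain directly
\begin{equation*}
\|N_{\varepsilon,\xi}(w_2)-N_{\varepsilon,\xi}(w_1)\|\le C(\|w_1\|+\|w_2\|)\|w_2-w_1\|,
\end{equation*}
which is the second conclusion of the lemma.

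Next I would verify that $N_{\varepsilon,\xi}$ maps $B_{\delta}$ into itself. The same operator bound gives
\begin{equation*}
\|N_{\varepsilon,\xi}(w)\|\le \|\mathcal L_{\varepsilon,\xi}^{-1}\|\bigl(\|Df_{\varepsilon}(z_{\xi})\|+\|\mathcal R(z_{\xi},w)\|\bigr)\le C_1\varepsilon+C_2\|w\|^2,
\end{equation*}
by Lemma \ref{l:dfvz} and Remark \ref{r:rsmall}. I would first fix $\delta>0$ so small that $C_2\delta^2\le \delta/2$, and then shrink $\bar\varepsilon$ further so that $C_1\bar\varepsilon\le \delta/2$. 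For $w\in B_{\delta}$ and $0<\varepsilon\le\bar\varepsilon$ this yields $\|N_{\varepsilon,\xi}(w)\|\le\delta$, so $N_{\varepsilon,\xi}(B_{\delta})\subset B_{\delta}$.

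Finally, after possibly shrinking $\delta$ once more so that $2C\delta<1$, the Lipschitz estimate above forces $N_{\varepsilon,\xi}$ to be a strict contraction on $B_{\delta}$, and the Banach fixed point theorem yields a unique $w=w_{\varepsilon,\xi}\in B_{\delta}$ solving (\ref{e:defn}). There is no real obstacle here beyond bookkeeping; the only point requiring care is making sure the constants in Proposition \ref{p:invert}, Lemma \ref{l:dfvz}, and Lemma \ref{l:rcontraction} can all be chosen uniformly over the compact parameter range $|\xi|\le\bar\delta$ and $0<\varepsilon\le\bar\varepsilon$, which is exactly how those earlier results were stated.
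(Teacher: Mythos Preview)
Your proof is correct and follows essentially the same approach as the paper: bound $\|N_{\varepsilon,\xi}(w)\|$ via Proposition~\ref{p:invert}, Lemma~\ref{l:dfvz} and Remark~\ref{r:rsmall} to get the self-map, and use Lemma~\ref{l:rcontraction} for the Lipschitz estimate. The only cosmetic difference is that the paper takes $\delta=\varepsilon^{1/3}$ (so $\delta\to0$ with $\varepsilon$), whereas you fix $\delta$ first and then shrink $\bar\varepsilon$; both choices work here.
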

\begin{proof}
From Lemma \ref{l:dfvz}, Remark \ref{r:rsmall} and (\ref{e:defn}), we have that
\begin{eqnarray}\label{e:nwev}
\|N_{\varepsilon,\xi}(w)\|\le C\|Df_{\varepsilon}(z_{\xi})\|+O(\|w\|^2)\le C(\varepsilon \nabla V(\varepsilon\xi)+O(\varepsilon^2))+O(\|w\|^2).
\end{eqnarray}
Hence there exists a small $\delta>0$ (i.e. $\delta=\varepsilon^{1/3}$) such that $N_{\varepsilon,\xi}$ maps $B_{\delta}$ into itself $0<\varepsilon\le \bar\varepsilon$ and $|\xi|\le \bar\delta$.

Furthermore, by Lemma \ref{l:rcontraction}, it holds that for $w_1,w_2\in B_{\delta}$,
\begin{eqnarray*}
\|N_{\varepsilon,\xi}(w_2)-N_{\varepsilon,\xi}(w_1)\|&\le& \|-\mathcal L_{\varepsilon,\xi}^{-1}(\mathcal R(z_{\xi},w_2)-\mathcal R(z_{\xi},w_1))\|\\
&\le &C\|\mathcal R(z_{\xi},w_2)-\mathcal R(z_{\xi},w_1)\|\le C(\|w_1\|+\|w_2\|)\|w_2-w_1\|.\notag
\end{eqnarray*}
This completes the proof.
\end{proof}

With those preparations at hand, we have the following result.
\begin{prop}\label{p:dfw0}
For $0<\varepsilon\le \bar\varepsilon$ and $|\xi|\le \bar\delta$, there exists a unique $w=w(\varepsilon,\xi)\in (T_{z_{\xi}}(Z_{\varepsilon}))^{\perp}$ of class $C^1$ with respect to $\xi$ such that $Df_{\varepsilon}(z_{\xi}+w)\in T_{z_{\xi}}(Z_{\varepsilon})$. Moreover, the functional $\Phi_{\varepsilon}(\xi):=f_{\varepsilon}(z_{\xi}+w(\varepsilon,\xi))$ has the same regularity as $w$ and satisfies:
\begin{equation*}
\nabla \Phi(\xi_0)=0\Rightarrow Df_{\varepsilon}(z_{\xi_0}+w(\varepsilon,\xi_0))=0.
\end{equation*}
\end{prop}
\begin{proof}
Since $N_{\varepsilon,\xi}$ is a contraction map on $B_{\delta}$ ($\delta$ is chosen as in Lemma \ref{l:ncontraction}) for $0<\varepsilon\le \bar\varepsilon$ and $|\xi|\le \bar\delta$, there exists a unique $w$ such that $w=N_{\varepsilon,\xi}(w)$. Furthermore, for fixed $\varepsilon>0$, applying the implicit function theorem to the map $(\xi,w)\to P_{\varepsilon,\xi}Df_{\varepsilon}(z_{\xi}+w)$, it holds that $w(\varepsilon,\xi)$ is of class $C^1$ with respect to $\xi$. Then standard argument as in \cite{AmbBad98,AmbBadCin1997} tells us that the critical points of $\Phi_{\varepsilon}=f_{\varepsilon}(z_{\xi}+w(\varepsilon,\xi))$ give rise to critical points of $f_{\varepsilon}$.
\end{proof}

\begin{remark}\label{r:wve}
From (\ref{e:nwev}), we see that
\begin{equation}\label{e:wve}
\|w\|\le C(\varepsilon V(\varepsilon\xi)+O(\varepsilon^2)).
\end{equation}
\end{remark}

We now verify that $\Phi_{\varepsilon}$ in fact is a perturbation of some function of $V$.

By the definition, it holds that
\begin{eqnarray*}
&&\Phi_{\varepsilon}(\xi)=\frac{1}{2}\|z_{\xi}+w(\varepsilon,\xi)\|^2+\frac{1}{2}\int_{\mathbb R^n}V(\varepsilon x)(z_{\xi}+w(\varepsilon,\xi))^2\\
&&\quad\quad\quad\quad\quad\quad\quad\quad\quad-\frac{1}{4}\int_{\mathbb R^n}(I_2\ast (z_{\xi}+w(\varepsilon,\xi))^2)(z_{\xi}+w(\varepsilon,\xi))^2.\notag
\end{eqnarray*}
Recalling that $z_{\xi}$ solves (\ref{e:namain2}), we have that
\begin{eqnarray*}
\|z_{\xi}\|^2=-V(\varepsilon\xi)\int_{\mathbb R^n}z_{\xi}^2+\int_{\mathbb R^n}(I_2\ast z_{\xi}^2)z_{\xi}^2,
\end{eqnarray*}
and
\begin{eqnarray*}
\langle z_{\xi},w\rangle=-V(\varepsilon\xi)\int_{\mathbb R^n}z_{\varepsilon}w+\int_{\mathbb R^n}(I_2\ast z_{\xi}^2)z_{\xi}w.
\end{eqnarray*}
Then
\begin{eqnarray*}
\Phi_{\varepsilon}(\xi)&=&\frac{1}{4}\int_{\mathbb R^n}[I_2\ast z_{\xi}^2]z_{\xi}^2+\frac{1}{2}\int_{\mathbb R^n}[V(\varepsilon x)-V(\varepsilon\xi)]z_{\xi}^2+\int_{\mathbb R^n}[V(\varepsilon x)-V(\varepsilon\xi)]z_{\xi}w\notag\\
&&+\frac{1}{2}\int_{\mathbb R^n}V(\varepsilon x)w^2+\frac{1}{2}\|w\|^2-\int_{\mathbb R^n}[I_2\ast (z_{\xi}w)]z_{\xi}w\notag\\
&&-\frac{1}{2}\int_{\mathbb R^n}[I_2\ast z_{\xi}^2]w^2-\int_{\mathbb R^n}[I_2\ast (z_{\xi}w)]w^2-\frac{1}{4}\int_{\mathbb R^n}[I_2\ast w^2]w^2.
\end{eqnarray*}
By the definition of $z_{\xi}$ (see (\ref{e:dfzx})), we obtain that
\begin{eqnarray*}
\int_{\mathbb R^n}[I_2\ast z_{\xi}^2]z_{\xi}^2&=&\int_{\mathbb R^n}\int_{\mathbb R^n}\frac{(\alpha(\varepsilon\xi))^2U^2(\beta(\varepsilon\xi)(x-\xi))(\alpha(\varepsilon\xi))^2U^2(\beta(\varepsilon\xi)(y-\xi))}{|x-y|^{n-2}}dxdy\\
&=&(\alpha(\varepsilon\xi))^4(\beta(\varepsilon\xi))^{-(n+2)}\int_{\mathbb R^n}\int_{\mathbb R^n}\frac{U^2(\bar x)U^2(\bar y)}{|\bar x-\bar y|^{n-2}}d\bar x d\bar y
=C_0(1+V(\varepsilon\xi))^{3-\frac{n}{2}},\notag
\end{eqnarray*}
where
\begin{equation*}
C_0=\int_{\mathbb R^n}\int_{\mathbb R^n}\frac{U^2(\bar x)U^2(\bar y)}{|\bar x-\bar y|^{n-2}}d\bar x d\bar y.
\end{equation*}
\begin{remark}
By assumption $n=3,4,5$, we have that $3-\frac{n}{2}>0$.
\end{remark}

For simplicity, set
\begin{eqnarray*}
\Gamma_{\varepsilon}(\xi)=\frac{1}{2}\int_{\mathbb R^n}[V(\varepsilon x)-V(\varepsilon\xi)]z_{\xi}^2+\int_{\mathbb R^n}[V(\varepsilon x)-V(\varepsilon\xi)]z_{\xi}w,
\end{eqnarray*}
and
\begin{eqnarray*}
\Psi_{\varepsilon}(\xi)&=&\frac{1}{2}\int_{\mathbb R^n}V(\varepsilon x)w^2+\frac{1}{2}\|w\|^2-\int_{\mathbb R^n}[I_2\ast (z_{\xi}w)]z_{\xi}w\\
&&-\frac{1}{2}\int_{\mathbb R^n}[I_2\ast z_{\xi}^2]w^2-\int_{\mathbb R^n}[I_2\ast (z_{\xi}w)]w^2-\frac{1}{4}\int_{\mathbb R^n}[I_2\ast w^2]w^2.\notag
\end{eqnarray*}
Therefore,
\begin{eqnarray}\label{e:dfofPhi}
\Phi_{\varepsilon}(\xi)=C_1(1+V(\varepsilon\xi))^{3-\frac{n}{2}}+\Gamma_{\varepsilon}(\xi)+\Psi_{\varepsilon}(\xi),
\end{eqnarray}
where $C_1=\frac{1}{4}C_0$.

We need to estimate $\Gamma_{\varepsilon}$ and $\Psi_{\varepsilon}$. Before that, we need to compute $\nabla_{\xi} w$.
\begin{lemma}\label{l:nwx}
For $0<\varepsilon\le \bar\varepsilon$ and $|\xi|\le \bar\delta$, it holds that
\begin{equation*}
|\nabla_{\xi}w|\le C\left(\varepsilon|\nabla V(\varepsilon\xi)|+O(\varepsilon^2)\right),
\end{equation*}
where $C$ is a constant depending on $\bar\delta$ and $\bar\varepsilon$.
\end{lemma}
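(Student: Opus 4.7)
The plan is to differentiate the fixed-point identity $w = N_{\varepsilon,\xi}(w)$ from (\ref{e:defn}) with respect to $\xi_i$ and then solve the resulting linear equation for $\partial_{\xi_i}w$. Applying the chain rule to $w(\varepsilon,\xi)=N_{\varepsilon,\xi}(w(\varepsilon,\xi))$ yields
$$\big(I - D_w N_{\varepsilon,\xi}(w)\big)\,\partial_{\xi_i}w \;=\; \big[\partial_{\xi_i}N_{\varepsilon,\xi}\big](w).$$
By Lemma \ref{l:ncontraction} the Lipschitz constant of $N_{\varepsilon,\xi}$ on $B_\delta$ is at most $C(\|w_1\|+\|w_2\|)$, so combined with the bound $\|w\|\le C(\varepsilon|\nabla V(\varepsilon\xi)|+\varepsilon^2)$ from Remark \ref{r:wve}, I obtain $\|D_w N_{\varepsilon,\xi}(w)\|\le 1/2$ for $\varepsilon$ small. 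Hence $I-D_w N_{\varepsilon,\xi}(w)$ is invertible with uniformly bounded inverse, and the task reduces to estimating $\|\partial_{\xi_i}N_{\varepsilon,\xi}(w)\|$.

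To carry out this estimate, I decompose $N_{\varepsilon,\xi}(w)=-\mathcal L_{\varepsilon,\xi}^{-1}\big(P_{\varepsilon,\xi}Df_\varepsilon(z_\xi)+P_{\varepsilon,\xi}\mathcal R(z_\xi,w)\big)$ and differentiate term by term, holding $w$ fixed. Terms in which $\partial_{\xi_i}$ lands on $\mathcal L_{\varepsilon,\xi}^{-1}$ or $P_{\varepsilon,\xi}$ are controlled by the $\xi$-derivative of those operators times $\|Df_\varepsilon(z_\xi)\|+\|\mathcal R(z_\xi,w)\|=O(\varepsilon|\nabla V(\varepsilon\xi)|+\varepsilon^2)$, using Lemma \ref{l:dfvz} and Remark \ref{r:rsmall}. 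The term in which $\partial_{\xi_i}$ hits $Df_\varepsilon(z_\xi)$ is computed from the identity $Df_\varepsilon(z_\xi)\varphi=\int(V(\varepsilon x)-V(\varepsilon\xi))z_\xi\varphi\,dx$ used in the proof of Lemma \ref{l:dfvz}; differentiation yields $-\varepsilon\partial_iV(\varepsilon\xi)\int z_\xi\varphi+\int(V(\varepsilon x)-V(\varepsilon\xi))\partial_{\xi_i}z_\xi\,\varphi\,dx$, and Taylor expanding $V$ around $\varepsilon\xi$ together with Lemma \ref{l:pxpx} and the exponential decay of $U$ and $U'$ (Lemma \ref{l:decayu'}) gives a norm bound of order $\varepsilon|\nabla V(\varepsilon\xi)|+\varepsilon^2$. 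Finally, the term in which $\partial_{\xi_i}$ acts on the $z_\xi$-slot of $\mathcal R(z_\xi,w)$ is of order $\|w\|^2=O((\varepsilon|\nabla V(\varepsilon\xi)|+\varepsilon^2)^2)$ by a Hardy--Littlewood--Sobolev argument parallel to Lemma \ref{l:rcontraction}. Summing these four contributions produces $\|\partial_{\xi_i}N_{\varepsilon,\xi}(w)\|\le C(\varepsilon|\nabla V(\varepsilon\xi)|+\varepsilon^2)$ and hence the lemma.

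The main obstacle is justifying the uniform $C^1$-dependence of the projection $P_{\varepsilon,\xi}$ and of the inverse $\mathcal L_{\varepsilon,\xi}^{-1}$ on $\xi\in\{|\xi|\le\bar\delta\}$. For the projection, one writes $P_{\varepsilon,\xi}=I-Q_{\varepsilon,\xi}$ with $Q_{\varepsilon,\xi}$ the orthogonal projection onto the $n$-dimensional space $T_{z_\xi}Z_\varepsilon=\mathrm{span}\{\partial_{\xi_j}z_\xi\}_{j=1}^n$, represents $Q_{\varepsilon,\xi}$ through the Gram matrix of $\{\partial_{\xi_j}z_\xi\}_{j=1}^n$, and invokes Lemma \ref{l:pxpx} plus the smooth dependence of $z_\xi$ on $\xi$ (via the explicit formula (\ref{e:dfzx})) to show that this Gram matrix and its inverse are uniformly bounded and smooth in $\xi$. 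For the resolvent, one differentiates $\mathcal L_{\varepsilon,\xi}\mathcal L_{\varepsilon,\xi}^{-1}=I$ to express $\partial_{\xi_i}\mathcal L_{\varepsilon,\xi}^{-1}=-\mathcal L_{\varepsilon,\xi}^{-1}(\partial_{\xi_i}\mathcal L_{\varepsilon,\xi})\mathcal L_{\varepsilon,\xi}^{-1}$ and uses Proposition \ref{p:invert} for the uniform bound on $\mathcal L_{\varepsilon,\xi}^{-1}$ together with the smooth $\xi$-dependence of the coefficients in $D^2f_\varepsilon(z_\xi)$. Once these two uniform-smoothness facts are in place, the estimates in the previous paragraph go through without further difficulty.
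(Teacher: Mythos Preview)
Your approach is correct and reaches the same estimate, but the organization differs from the paper's. You differentiate the fixed-point identity $w=N_{\varepsilon,\xi}(w)$ at the operator level, which obliges you to control the $\xi$-derivatives of the composite operators $\mathcal L_{\varepsilon,\xi}^{-1}$ and $P_{\varepsilon,\xi}$ separately (your third paragraph). The paper instead expands the equation $\langle\mathcal L_{\varepsilon,\xi}w,\varphi\rangle+\langle Df_\varepsilon(z_\xi),\varphi\rangle+\langle\mathcal R(z_\xi,w),\varphi\rangle=0$ fully into its scalar weak form, differentiates every integral term by term in $\xi_i$, and then regroups all the pieces containing $\partial_{\xi_i}w$ back into $\langle\mathcal L_{\varepsilon,\xi}(\partial_{\xi_i}w),\varphi\rangle$; the remaining pieces on the right---convolution integrals such as $\int\!\!\int z_\xi\,\partial_{\xi_i}z_\xi(x)\,w(y)\varphi(y)\,|x-y|^{2-n}\,dx\,dy$ and the potential terms $\varepsilon(\partial_iV)(\varepsilon\xi)\int z_\xi\varphi$ and $\int(V(\varepsilon x)-V(\varepsilon\xi))\partial_{\xi_i}z_\xi\,\varphi$---are estimated one by one via Lemma \ref{l:hls} and H\"older, each contributing at most $C(\|w\|+\varepsilon|\nabla V(\varepsilon\xi)|+\varepsilon^2)\|\varphi\|$, and Proposition \ref{p:invert} closes the argument. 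Thus the paper never isolates $\partial_{\xi_i}\mathcal L_{\varepsilon,\xi}^{-1}$ or $\partial_{\xi_i}P_{\varepsilon,\xi}$ as operators: the contributions you would attribute to them simply appear as explicit integrals on the right-hand side. Your route is more systematic and makes the smoothness requirements on $P_{\varepsilon,\xi}$ and $\mathcal L_{\varepsilon,\xi}^{-1}$ explicit; the paper's is shorter because it stays at the level of concrete integrals and avoids the resolvent-identity bookkeeping.
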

\begin{proof}
By (\ref{e:dfexp}) and Proposition \ref{p:dfw0}, it holds that
\begin{equation*}
\langle\mathcal L_{\varepsilon,\xi} w,\varphi\rangle+\langle Df_{\varepsilon}(z_{\xi}),\varphi\rangle+\langle\mathcal R(z_{\xi},w),\varphi\rangle=0,\quad\forall\varphi\in (T_{z_{\xi}}(Z_{\varepsilon}))^{\perp}.
\end{equation*}
Then using the definition of $\mathcal L_{\varepsilon,\xi}$ and $DF_{\varepsilon,\xi}(z_{\xi})=0$, we have
\begin{eqnarray*}
0&=&\langle w,\varphi\rangle+\int_{\mathbb R^n}V(\varepsilon x)w\varphi dx-\int_{\mathbb R^n}\int_{\mathbb R^n}\frac{z_{\xi}^2(x)w(y)\varphi(y)}{|x-y|^{n-2}}dxdy\notag\\
&&-2\int_{\mathbb R^n}\int_{\mathbb R^n}\frac{z_{\xi}(x)w(x)z_{\xi}(y)\varphi(y)}{|x-y|^{n-2}}dxdy\notag\\
&&+\int_{\mathbb R^n}[V(\varepsilon x)-V(\varepsilon\xi)]z_{\xi}\varphi dx+\int_{\mathbb R^n}R(z_{\xi},w)\varphi dx.
\end{eqnarray*}
It follows that
\begin{eqnarray*}
0&=&\langle \partial_{\xi_i}w,\varphi\rangle+\int_{\mathbb R^n}V(\varepsilon x)\partial_{\xi_i}w\varphi dx-\int_{\mathbb R^n}\int_{\mathbb R^n}\frac{z_{\xi}^2(x)\partial_{\xi_i}w(y)\varphi(y)}{|x-y|^{n-2}}dxdy\notag\\
&&-2\int_{\mathbb R^n}\int_{\mathbb R^n}\frac{z_{\xi}(x)\partial_{\xi_i}w(x)z_{\xi}(y)\varphi(y)}{|x-y|^{n-2}}dxdy\notag\\
&&-2\int_{\mathbb R^n}\int_{\mathbb R^n}\frac{z_{\xi}\partial_{\xi_i}z_{\xi}(x)w(y)\varphi(y)}{|x-y|^{n-2}}dxdy
-2\int_{\mathbb R^n}\int_{\mathbb R^n}\frac{\partial_{\xi_i}z_{\xi}(x)w(x)z_{\xi}(y)\varphi(y)}{|x-y|^{n-2}}dxdy\notag\\
&&-2\int_{\mathbb R^n}\int_{\mathbb R^n}\frac{z_{\xi}(x)w(x)\partial_{\xi_i}z_{\xi}(y)\varphi(y)}{|x-y|^{n-2}}dxdy
-\varepsilon (\partial_{x_i}V)(\varepsilon\xi)\int_{\mathbb R^n}z_{\xi}\varphi dx\notag\\
&&+\int_{\mathbb R^n}[V(\varepsilon x)-V(\varepsilon\xi)]\partial_{\xi_i}z_{\xi}\varphi dx+\partial_{\xi_i}\left(\int_{\mathbb R^n}R(z_{\xi},w)\varphi dx\right).
\end{eqnarray*}
Again by the definition of $\mathcal L_{\varepsilon,\xi}$, we get that
\begin{eqnarray}\label{e:pxwlv}
&&\langle\mathcal L_{\varepsilon,\xi}(\partial_{\xi_i}w),\varphi\rangle\notag\\
&&=2\int_{\mathbb R^n}\int_{\mathbb R^n}\frac{z_{\xi}\partial_{\xi_i}z_{\xi}(x)w(y)\varphi(y)}{|x-y|^{n-2}}dxdy
+2\int_{\mathbb R^n}\int_{\mathbb R^n}\frac{\partial_{\xi_i}z_{\xi}(x)w(x)z_{\xi}(y)\varphi(y)}{|x-y|^{n-2}}dxdy\notag\\
&&\quad+2\int_{\mathbb R^n}\int_{\mathbb R^n}\frac{z_{\xi}(x)w(x)\partial_{\xi_i}z_{\xi}(y)\varphi(y)}{|x-y|^{n-2}}dxdy
+\varepsilon (\partial_{x_i}V)(\varepsilon\xi)\int_{\mathbb R^n}z_{\xi}\varphi dx\notag\\
&&\quad-\int_{\mathbb R^n}[V(\varepsilon x)-V(\varepsilon\xi)]\partial_{\xi_i}z_{\xi}\varphi dx-\partial_{\xi_i}\left(\int_{\mathbb R^n}R(z_{\xi},w)\varphi dx\right).
\end{eqnarray}
Next we estimate the right side of the last equation term by term.

Using Lemma \ref{l:hls}, we find that
\begin{eqnarray}\label{e:hlsem}
\left|\int_{\mathbb R^n}\int_{\mathbb R^n}\frac{z_{\xi}\partial_{\xi_i}z_{\xi}(x)w(y)\varphi(y)}{|x-y|^{n-2}}dxdy\right|
\le C\|w\| \|\varphi\|,
\end{eqnarray}
and
\begin{multline}\label{e:hlsem2}
\left|\int_{\mathbb R^n}\int_{\mathbb R^n}\frac{\partial_{\xi_i}z_{\xi}(x)w(x)z_{\xi}(y)\varphi(y)}{|x-y|^{n-2}}dxdy\right|+\left|\int_{\mathbb R^n}\int_{\mathbb R^n}\frac{z_{\xi}(x)w(x)\partial_{\xi_i}z_{\xi}(y)\varphi(y)}{|x-y|^{n-2}}dxdy\right|\\
\le C\|w\| \|\varphi\|.
\end{multline}
Similarly, by Lemma \ref{l:hls}, it holds that
\begin{eqnarray}\label{e:hlsem3}
&&\left|\partial_{\xi_i}\left(\int_{\mathbb R^n}R(z_{\xi},w)\varphi dx\right)\right|\\
&\le&\left|\int_{\mathbb R^n}\left(2[I_2\ast (w\partial_{\xi_i}w)]z_{\xi}+[I_2\ast w^2]\partial_{\xi_i}z_{\xi}\right)\varphi\right|\notag\\
&&+\left|\int_{\mathbb R^n}\left(2[I_2\ast (\partial_{\xi_i}z_{\xi}w)]w+2[I_2\ast (z_{\xi}\partial_{\xi_i}w)]w+[I_2\ast (z_{\xi}w)]\partial_{\xi_i}w\right)\varphi\right|\notag\\
&&+\left|\int_{\mathbb R^n}\left(2[I_2\ast (w\partial_{\xi_i}w)]w+[I_2\ast w^2]\partial_{\xi_i}w\right)\varphi\right|\le C\|w\|\|\varphi\|\|\partial_{\xi_i}w\|.
\end{eqnarray}
The H\"older inequality yields that
\begin{eqnarray}\label{e:hlsem4}
\left|\varepsilon (\partial_{x_i}V)(\varepsilon\xi)\int_{\mathbb R^n}z_{\xi}\varphi dx\right|\le C\varepsilon|\nabla V(\varepsilon\xi)|\|\varphi\|.
\end{eqnarray}
Combining the H\"older inequality, Lemma \ref{l:decayu'} and (\ref{e:nite}), we have
\begin{eqnarray}\label{e:hlsem5}
\left|\int_{\mathbb R^n}[V(\varepsilon x)-V(\varepsilon\xi)]\partial_{\xi_i}z_{\xi}\varphi dx\right|\le C(\varepsilon|\nabla V(\varepsilon\xi)|+\varepsilon^2)\|\varphi\|.
\end{eqnarray}
Note that $\mathcal L_{\varepsilon,\xi}$ is invertible in $(T_{z_{\varepsilon}}(Z_{\varepsilon}))^{\perp}$. Then from (\ref{e:pxwlv}-\ref{e:hlsem5}) and (\ref{e:wve}), we obtain that
\begin{eqnarray*}
\|\partial_{\xi_i}w\|\le C(\varepsilon|\nabla V(\varepsilon\xi)|+\varepsilon^2),
\end{eqnarray*}
where $C$ is a constant depending on $\bar\delta$ and $\bar\varepsilon$. This completes the proof.
\end{proof}
\begin{lemma}\label{l:psigammasmall}
For $0<\varepsilon\le\bar\varepsilon$ and $|\xi|\le \bar \delta$, it holds that
\begin{eqnarray}\label{e:gammasmall}
|\Gamma_{\varepsilon}(\xi)|+|\Psi_{\varepsilon}(\xi)|\le C(\varepsilon|\nabla V(\varepsilon\xi)|+\varepsilon^2),
\end{eqnarray}
and
\begin{eqnarray}\label{e:nablagamma}
|\nabla \Gamma_{\varepsilon}(\xi)|+|\nabla \Psi_{\varepsilon}(\xi)|\le C\varepsilon^2.
\end{eqnarray}
\end{lemma}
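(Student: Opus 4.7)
The plan is to estimate each summand in $\Gamma_{\varepsilon}$ and $\Psi_{\varepsilon}$ (and in their $\xi$-gradients) using four ingredients: the radial symmetry of $z_{\xi}$ about $\xi$ (which kills odd moments), Taylor expansion of $V$ around $\varepsilon\xi$, the Hardy--Littlewood--Sobolev inequality in the form of Lemma \ref{l:hls}, and the smallness bounds $\|w\|\le C(\varepsilon|\nabla V(\varepsilon\xi)|+\varepsilon^2)$ from Remark \ref{r:wve} together with $\|\nabla_{\xi} w\|\le C(\varepsilon|\nabla V(\varepsilon\xi)|+\varepsilon^2)$ from Lemma \ref{l:nwx}.

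For $|\Gamma_{\varepsilon}|+|\Psi_{\varepsilon}|$: in the piece $\tfrac12\int[V(\varepsilon x)-V(\varepsilon\xi)]z_{\xi}^2$, a Taylor expansion of $V$ produces a linear-in-$(x-\xi)$ term that integrates to zero against the radial $z_{\xi}^2$, leaving an $O(\varepsilon^2)$ remainder. The cross term $\int[V(\varepsilon x)-V(\varepsilon\xi)]z_{\xi} w$ is treated by Cauchy--Schwarz together with the weighted bound (\ref{e:nite}) and Remark \ref{r:wve}. Every summand in $\Psi_{\varepsilon}$ is at least quadratic in $w$: the local quadratic parts are controlled by $C\|w\|^2$, while the four nonlocal quartic pieces are dominated via Lemma \ref{l:hls} by products of the form $\|z_{\xi}\|^{4-j}\|w\|^{j}$ with $j\in\{2,3,4\}$. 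Summing yields the stated bound.

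For $|\nabla\Gamma_{\varepsilon}|+|\nabla\Psi_{\varepsilon}|\le C\varepsilon^2$: differentiating $\Psi_{\varepsilon}$ by the product rule and re-applying Lemma \ref{l:hls} produces only expressions each containing at least two factors drawn from $\{w,\nabla_{\xi} w\}$, both of size $O(\varepsilon)$, hence $O(\varepsilon^2)$ in total. For $\nabla\Gamma_{\varepsilon}$ I would pass to the translated variable $y=x-\xi$ and rewrite the leading piece as $\tfrac12\int[V(\varepsilon\xi+\varepsilon y)-V(\varepsilon\xi)]\bar z_{\xi}^2(y)\,dy$, where $\bar z_{\xi}(y)=\alpha(\varepsilon\xi)U(\beta(\varepsilon\xi)y)$ is radial in $y$. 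Differentiating, the apparent $O(\varepsilon)$ contribution from $-\varepsilon\,\partial_{x_i}V(\varepsilon\xi)\int z_{\xi}^2$ cancels against the derivative hitting $V(\varepsilon\xi+\varepsilon y)$, and what remains are two integrals whose leading Taylor-order terms, of the form $\varepsilon\nabla V(\varepsilon\xi)\cdot y$ against either $\bar z_{\xi}^2(y)$ or the radial $\partial_{\xi_i}\bar z_{\xi}^2(y)$, annihilate by radial symmetry; the remainders are manifestly $O(\varepsilon^2)$ (indeed $O(\varepsilon^3)$). The gradient of the cross term $\int[V(\varepsilon x)-V(\varepsilon\xi)]z_{\xi} w$ picks up factors of $\nabla_{\xi} z_{\xi}$ or $\nabla_{\xi} w$, and is bounded by combining (\ref{e:nite}), Remark \ref{r:wve} and Lemma \ref{l:nwx}, together with the exponential decay of $\partial_{x_i}U$ supplied by Lemma \ref{l:decayu'}.

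The principal technical obstacle is precisely the cancellation just described for $\nabla\Gamma_{\varepsilon}$: naively the term $-\tfrac\varepsilon2\,\partial_{x_i}V(\varepsilon\xi)\int z_{\xi}^2$ is only $O(\varepsilon)$, so obtaining the sharper $O(\varepsilon^2)$ bound forces one to exploit both the cancellation with the derivative of $V(\varepsilon x)$ under the integral and the radial symmetry of $\bar z_{\xi}^2$ and of $\partial_{\xi_i}\bar z_{\xi}^2$. Once this is arranged, the remaining estimates are routine consequences of Cauchy--Schwarz, Lemma \ref{l:hls}, and the previously established decay bounds for $U$, $U'$, $w$ and $\nabla_{\xi} w$.
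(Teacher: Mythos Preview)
Your proposal is correct and matches the paper's proof in all essentials: the same four ingredients (Taylor expansion of $V$, radial symmetry of $z_{\xi}$ about $\xi$, Lemma~\ref{l:hls}, and the bounds from Remark~\ref{r:wve} and Lemma~\ref{l:nwx}) are used in the same places. The only cosmetic difference concerns the leading piece of $\nabla\Gamma_{\varepsilon}$: the paper first Taylor-expands to show that $\int[V(\varepsilon x)-V(\varepsilon\xi)]z_{\xi}^{2}$ already equals an $\varepsilon^{2}$-weighted integral (the first-order term vanishing by oddness) and \emph{then} differentiates, whereas you differentiate first and then exhibit the cancellation; both routes rely on the identical odd-against-radial mechanism and the assumption $V\in C_b^3$, and give the same $O(\varepsilon^{2})$ conclusion.
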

\begin{proof}
Using Lemma \ref{l:hls} and a similar argument as in the proof of (\ref{e:nite}), we can obtain (\ref{e:gammasmall}). The details are omitted here. We now focus on the verification of (\ref{e:nablagamma}) which is more complicated.

1. By Taylor expansion of $V$, we get that
\begin{eqnarray*}
&&\int_{\mathbb R^n}[V(\varepsilon x)-V(\varepsilon\xi)]z_{\xi}^2\\
&=&\varepsilon\int_{\mathbb R^n}\nabla V(\varepsilon\xi)\cdot (x-\xi)z_{\xi}^2dx+\varepsilon^2\int_{\mathbb R^n}D^2V(\varepsilon\xi+\vartheta\varepsilon(x-\xi))[x-\xi,x-\xi]z_{\xi}^2dx\notag\\
&=&\varepsilon\int_{\mathbb R^n}\nabla V(\varepsilon\xi)\cdot y (\alpha(\varepsilon\xi)U(\beta(\varepsilon\xi)y)^2dy+\varepsilon^2\int_{\mathbb R^n}D^2V(\varepsilon\xi+\vartheta\varepsilon(x-\xi))[x-\xi,x-\xi]z_{\xi}^2dx\notag\\
&=&\varepsilon^2\int_{\mathbb R^n}D^2V(\varepsilon\xi+\vartheta\varepsilon(x-\xi))[x-\xi,x-\xi]z_{\xi}^2dx\notag,
\end{eqnarray*}
where $\vartheta\in (0,1)$. Here the third equality holds because $\nabla V(\varepsilon\xi)\cdot y (\alpha(\varepsilon\xi)U(\beta(\varepsilon\xi)y)^2$ is odd with respect to $y$ in $\mathbb R^n$. Since $V\in C_{b}^2(\mathbb R^n)$, we have that
\begin{eqnarray}\label{e:nablavz2}
&&\left|\partial_{\xi_i}\left(\int_{\mathbb R^n}[V(\varepsilon x)-V(\varepsilon\xi)]z_{\xi}^2dx\right)\right|\\
&\le& \varepsilon^2\left|\partial_{\xi_i}\left(\int_{\mathbb R^n}D^2V(\varepsilon\xi+\vartheta\varepsilon(x-\xi))[x-\xi,x-\xi]z_{\xi}^2dx\right)\right|\le C\varepsilon^2.\notag
\end{eqnarray}

On the other hand, by the H\"older inequality, we find
\begin{eqnarray*}
&&\left|\partial_{\xi_i}\left(\int_{\mathbb R^n}[V(\varepsilon x)-V(\varepsilon\xi)]z_{\xi}wdx\right)\right|\\
&\le&\varepsilon |\nabla V(\varepsilon\xi)|\int_{\mathbb R^n}|z_{\xi}w|dx+\int_{\mathbb R^n}|V(\varepsilon x)-V(\varepsilon\xi)||\partial_{\xi_i}z_{\xi}||w|dx\notag\\
&&+\int_{\mathbb R^n}|V(\varepsilon x)-V(\varepsilon\xi)||z_{\xi}||\partial_{\xi_i}w|dx\notag\\
&\le&C\varepsilon |\nabla V(\varepsilon\xi)|\|w\|
+\left(\int_{\mathbb R^n}|V(\varepsilon x)-V(\varepsilon\xi)|^2|\partial_{\xi_i}z_{\xi}|^2\right)^{\frac{1}{2}}\|w\|\notag\\
&&+\left(\int_{\mathbb R^n}|V(\varepsilon x)-V(\varepsilon\xi)|^2|z_{\xi}|^2\right)^{\frac{1}{2}}\|\partial_{\xi_i}w\|.\notag
\end{eqnarray*}
From (\ref{e:nite}), Lemma \ref{l:decayu'}, Remark \ref{r:wve} and Lemma \ref{l:nwx}, it holds that
\begin{eqnarray}\label{e:nablavzw}
\left|\nabla\left(\int_{\mathbb R^n}[V(\varepsilon x)-V(\varepsilon\xi)]z_{\xi}wdx\right) \right|\le C\varepsilon(\varepsilon+\|w\|+\|\nabla w\|)\le C\varepsilon^2.
\end{eqnarray}
Then from (\ref{e:nablavz2}) and (\ref{e:nablavzw}), it holds that
\begin{eqnarray}\label{e:nablagamma2}
|\nabla \Gamma_{\varepsilon}(\xi)|\le C\varepsilon^2.
\end{eqnarray}

2. We now estimate $|\nabla \Psi_{\varepsilon}|$. Compute
\begin{eqnarray*}
\left|\partial_{\xi_i}\left(\int_{\mathbb R^n}V(\varepsilon x)w^2dx+\|w\|^2\right)\right|
&\le& 2\int_{\mathbb R^n}V(\varepsilon x)|w||\partial_{\xi_i}w|dx+2|\langle w,\partial_{\xi_i}w\rangle|\notag\\
&\le& C\|w\|\|\partial_{\xi_i}w\|.
\end{eqnarray*}
Moreover, by Lemma \ref{l:hls}, we get that
\begin{eqnarray*}
&&\left|\partial_{\xi_i}\left(\int_{\mathbb R^n}[I_2\ast (z_{\xi}w)]z_{\xi}w\right)\right|
=2\int_{\mathbb R^n}\int_{\mathbb R^n}\frac{|[\partial_{\xi_i}z_{\xi}](x)w(x)||z_{\xi}(y)w(y)|}{|x-y|^{n-2}}dxdy\\
&&\quad\quad+2\int_{\mathbb R^n}\int_{\mathbb R^n}\frac{|z_{\xi}(x)[\partial_{\xi_i}w](x)||z_{\xi}(y)w(y)|}{|x-y|^{n-2}}dxdy
\le C\|w\|(\|w\|+\|\partial_{\xi_i}w\|),\notag
\end{eqnarray*}
and
\begin{eqnarray*}
&&\frac{1}{2}\left|\partial_{\xi_i}\int_{\mathbb R^n}[I_2\ast z_{\xi}^2](w(\varepsilon,\xi))^2\right|+\left|\partial_{\xi_i}\int_{\mathbb R^n}[I_2\ast (z_{\xi}w(\varepsilon,\xi))](w(\varepsilon,\xi))^2\right|\notag\\
&&+\left|\partial_{\xi_i}\frac{1}{4}\int_{\mathbb R^n}[I_2\ast (w(\varepsilon,\xi))^2](w(\varepsilon,\xi))^2\right|\le C\|w\|(\|w\|+\|\partial_{\xi_i}w\|).
\end{eqnarray*}
Hence by Lemma \ref{l:nwx} and Remark \ref{r:wve}, it holds that
\begin{eqnarray}\label{e:nablapsi}
|\nabla \Psi_{\varepsilon}(\xi)|\le C\|w\|(\|w\|+\|\partial_{\xi_i}w\|)\le C\varepsilon^2.
\end{eqnarray}
Then putting (\ref{e:nablagamma2}) and (\ref{e:nablapsi}) together, we obtain (\ref{e:nablagamma}). This completes the proof.
\end{proof}

\subsection{Proof of Theorem \ref{t:multiple}}

Let $M\subset \mathbb R^n$ be a non-empty set. We denote by $M_{\delta}$ its $\delta-$neighbourhood. The cup length $l(M)$ of $M$ is given by
\begin{eqnarray}\label{e:cupl}
l(M)=1+\sup\{k\in \mathbb N\,|\,\exists \Lambda_1,\Lambda_2,\cdots,\Lambda_k\in \breve{H}^*(M)\setminus 1,\,\Lambda_1\cup\Lambda_2\cup\cdots\cup\Lambda_k\ne 0\},
\end{eqnarray}
where $\breve{H}^*(M)$ is the Alexander cohomology of $M$ with real coefficients.

\begin{proof}[Proof of Theorem \ref{t:multiple}]
Choose $\bar R>0$ sufficiently large such that $M\subset B_{\bar R}$. $M$ is a non-degenerate critical manifold of $C_1(1+V)^{3-\frac{n}{2}}$ (recall that $M$ is a non-degenerate smooth critical manifold of $V$). Let
\begin{equation*}
h(\xi)=C_1(1+V(\xi))^{3-\frac{n}{2}}\quad\mbox{ and }\quad g(\xi)=\Phi_{\varepsilon}\left(\frac{\xi}{\varepsilon}\right).
\end{equation*}
Choose a $\delta$-neighbourhood $M_{\delta}$ of $M$ such that $M_{\delta}\subset B_{\bar R}$, so the set of critical points of $V$ in $M_{\delta}$ is $M$. From (\ref{e:dfofPhi}) and Lemma \ref{l:psigammasmall}, the function $\Phi_{\varepsilon}(\cdot/\varepsilon)$ is converges to $h(\cdot)$ in $C^1(\overline {M_{\delta}})$ as $\varepsilon\to 0$. Then \cite[Theorem 6.4 of Chapter II]{Chang93} yields that there exist at least $l(M)$ critical points of $g$ for $\varepsilon$ small enough.

Assume $\xi_k\in M_{\delta}$ such that $\xi_k/\varepsilon$ is a critical point of $\Phi_{\varepsilon}$. Then Proposition \ref{p:dfw0} implies that
\begin{equation*}
u_{\varepsilon,\xi_k}(x):=z_{\xi_k}\left(x-\frac{\xi_k}{\varepsilon}\right)+w(\varepsilon,\xi_k)
\end{equation*}
is a critical point of $f_{\varepsilon}$. Hence
\begin{equation*}
u_{\varepsilon,\xi_k}\left(\frac{x}{\varepsilon}\right)\simeq z_{\xi_k}\left(\frac{x-\xi_k}{\varepsilon}\right)
\end{equation*}
is a solution of Equation (\ref{e:namain}). When $\varepsilon\to 0$, $\xi_k$ converges to some point $\bar\xi_k\in M_{\delta}$. By Lemma \ref{l:psigammasmall} and (\ref{e:dfofPhi}), we conclude that $\bar\xi_k$ is a critical point of $V$. Since $\delta$ is arbitrary, we have that $\bar\xi_k\in M$. Therefore, $u_{\varepsilon,\xi_k}(x/\varepsilon)$ concentrates to a point of $M$. This completes the proof.
\end{proof}

\section*{Acknowledgement} The author is indebted to Minbo Yang for many useful discussions on Choquard equation. This work is supported by Zhejiang Provincial Science Foundation of China (No. LY18A010023), NSFC (No.11771386) and First Class Discipline of Zhejiang - A (Zhejiang University of Finance and Economics- Statistics).

%\newpage

%\input{Appendix}
%\newpage

\newcommand{\Toappear}{to appear in}

\bibliography{mrabbrev,cz_abbr2003-0,localbib}

\bibliographystyle{abbrv}
\end{spacing}
\end{document}